\title{Two optimization problems for the Loewner energy}
\setlist[enumerate]{topsep = 1ex, leftmargin=.5cm, itemsep= -3pt}
\setlist[itemize]{topsep = 1ex, leftmargin=.5cm, itemsep= -3pt}
\let\OLDthebibliography\thebibliography
\renewcommand\thebibliography[1]{
  \OLDthebibliography{#1}
  \setlength{\parskip}{1pt}
  \setlength{\itemsep}{2pt}
}
\newtheorem{thm}{Theorem}[section]
\newtheorem{lem}[thm]{Lemma}
\theoremstyle{definition} 
\newtheorem{df}[thm]{Definition}
\newtheorem{ex}[thm]{Example}
\newtheorem{qn}[thm]{Question}
\newtheorem{remark}[thm]{Remark}
\def\weld{h}
 \global\long\def\domain{D}
\global\long\def\bpt{a} 
 \global\long\def\ept{b} 
 \global\long\def\ii{\mathfrak{i}}
\global\long\def\jj{\mathfrak{j}}
\newcommand{\abs}[1]{\left\lvert #1 \right \rvert}
\newcommand{\brac}[1]{\left \langle #1 \right \rangle}
\newcommand{\mc}[1]{\mathcal{#1}}
\newcommand{\m}[1]{\mathbb{#1}}
\renewcommand\Im{\operatorname{Im}}
\def\PSL{\operatorname{PSL}}
\def\QS{\operatorname{QS}}
\def\tr{\operatorname{Tr}}
\def\WP{\operatorname{WP}}
\def\a{\alpha}
\def\b{\beta}
\def\g{\gamma}
\def\d{\delta}
\def\t{\theta}
\def\l{\lambda}
\def\O{\Omega}
\def\vare{\varepsilon}
\def\Chat{\hat{\m{C}}}
 \def\RP{\mathbb{RP}}
 \def\Homeo{\operatorname{Homeo}}
 \def\gr{\operatorname{gr}}
 \def\AdS{\operatorname{AdS}}
 \def\Id{\operatorname{Id}}
 \def\adj{\operatorname{adj}}
 \def\Ker{\operatorname{Ker}}
 \def\Span{\operatorname{Span}}
 \def\Isom{\operatorname{Isom}}
\def\dd{\mathrm{d}}
\newcommand{\ad}[1]{\overline{#1}}
\def\P{\m P}
\def\1{\mathbf{1}}
\author{Yilin Wang\thanks{Funded by the European Union (ERC, RaConTeich, 101116694). Views and opinions expressed are however those of the author(s) only and do not necessarily reflect those of the European Union or the European Research Council Executive Agency. Neither the European Union nor the granting authority can be held responsible for them.}\\
\emph{Institut des Hautes \'Etudes Scientifiques}\\
35 Route de Chartres, 91440 Bures-sur-Yvette, France\\
\protect\url{yilin@ihes.fr}}
\date{\today}
\begin{document}

 \maketitle
\begin{abstract}
A Jordan curve on the Riemann sphere can be encoded by its conformal welding, a circle homeomorphism. The Loewner energy measures how far a Jordan curve is away from being a circle, or equivalently, how far its welding homeomorphism is away from being a M\"obius transformation. We consider two optimizing problems for the Loewner energy, one under the constraint for the curves to pass through $n$ given points on the Riemann sphere, which is the conformal boundary of hyperbolic $3$-space $\mathbb H^3$; the other under the constraint for $n$ given points on the circle to be welded to another $n$ given points of the circle. The latter problem can be viewed as optimizing positive curves on the boundary of AdS$^3$ space passing through $n$ prescribed points.  We observe that the answers to the two problems exhibit interesting symmetries: optimizing the Jordan curve in $\partial_\infty \mathbb H^3$ gives rise to a welding homeomorphism that is the boundary of a pleated plane in AdS$^3$, whereas optimizing the positive curve in $\partial_\infty\!\operatorname{AdS}^3$ gives rise to a Jordan curve that is the boundary of a pleated plane in $\m H^3$.  
\end{abstract}

\section{Introduction}
Conformal welding 
encodes a Jordan curve into a circle homeomorphism. It is a classical subject in geometric function theory to study the correspondence between the analytic properties of the curve and homeomorphism. 
More precisely, let $\g \subset \Chat = \m C \cup \{\infty\}$ be an oriented Jordan curve (in this work, all Jordan curves are on the Riemann sphere). We denote the two connected components of $\Chat \smallsetminus \g$ on the left and right of $\g$ by $\O$ and $\O^*$, respectively. We write 
$\m H^2 : = \{z \in \m C \,|\, \Im(z) > 0\}$ and $\m H^{2,*} : = \{z \in \Chat  \,|\, \Im(z) < 0\}$. 
From the Carath\'eodory theorem, any conformal map $f$ from $\m H^2$ onto $\O$ extends continuously to a homeomorphism of their closures and defines a homeomorphism from 
$\m {RP}^1$ to $\g$.  
Here we have identified  $\RP^1$ with $\partial_\infty \m H^2 = \m R \cup \{\infty\}$ via the map $[x:y] \mapsto x/y$, which can be further identified with the unit circle in $\m C$ by the Cayley map $z \mapsto (z - \ii)/(z + \ii)$. Similarly, any conformal map $g$ from  $\m H^{2,*}$ onto $\O^*$ defines a homeomorphism from 
$\m {RP}^1$ to $\g$. 
The \emph{welding homeomorphism} (or simply the \emph{welding}) of $\g$ is defined as 
\begin{equation}\label{eq:welding}
   \weld_\g := g^{-1} \circ f |_{\RP^1} \in \Homeo_+(\RP^1)
\end{equation}
where we denote by $\Homeo_+(\RP^1)$ the space of orientation preserving homeomorphisms of $\RP^1$.
We note that there is some ambiguity in this definition. We may pre-compose $f$ and $g$  by M\"obius transformations preserving $\RP^1$, namely by elements in 
$$\PSL (2, \m R) : = \left\{ \a = \begin{pmatrix}
    a & b \\ c & d
\end{pmatrix} \,\Bigg|\, a, b , c,d \in \m R, \,ad -bc = 1\right\}_{/ \a \sim - \a}$$
which act on $\m H^2$ and $\m H^{2,*}$ by  \emph{M\"obius transformations} $z \mapsto (az + b)/(cz+d)$ and the induced action on $\RP^1$ is simply the linear map 
\begin{equation}\label{eq:action_RP1} \RP^1 \to \RP^1\quad : \quad
\begin{pmatrix}
    x\\ y
\end{pmatrix} \mapsto  \begin{pmatrix}
    a & b \\ c & d
\end{pmatrix} \begin{pmatrix}
    x\\ y
\end{pmatrix}.    
\end{equation}
Hence, the welding of a Jordan curve should be considered as an equivalence class in 
$$\PSL (2, \m R) \backslash \Homeo_+(\RP^1) /  \PSL (2, \m R)$$ of the equivalence relation
\begin{equation}\label{eq:equivariant_welding}
\weld_\g \sim \b \circ \weld_\g \circ \a^{-1}, \qquad  \forall \a, \b \in \PSL(2,\m R).
\end{equation}

Given $\weld \in \Homeo_+(\RP^1)$, by solving the \emph{conformal welding problem} for $\weld$, we mean finding a Jordan curve $\g$ and corresponding conformal maps $f,g$ such that $\weld_\g = g^{-1} \circ f |_{\RP^1}= \weld$. 
If a solution $(\g; f,g)$ exists, then $(A \circ \g; A \circ f, A \circ g)$ is also a solution, where $A$ is a conformal automorphism of $\Chat$, namely, 
$$A \in \PSL(2, \m C) :  = \left\{ A = \begin{pmatrix}
    a & b \\ c & d
\end{pmatrix} \,\Bigg|\, a, b , c,d \in \m C, \,ad -bc = 1\right\}_{/ A \sim - A}$$ 
which acts on $\Chat$ similarly by M\"obius transformations $z \mapsto (az + b)/(cz+d)$. In other words, the Jordan curve in a solution to the welding problem should be viewed as an equivalence class  in 
$$\PSL(2,\m C) \backslash \{\text{Jordan curves}\}$$
of the equivalence relation
\begin{equation}\label{eq:equivariant_curve}
\g \sim A \circ \g , \qquad \forall A \in \PSL(2,\m C). 
\end{equation}
We note that for a general circle homeomorphism, a solution to the welding problem may not exist, and if it exists, it may not be unique; see, e.g., \cite{Bishop_ann_welding} and the references therein. Characterizing all elements in $\Homeo_+(\RP^1)$ such that there is a unique solution is a challenging open question \cite{Bishop_CR_hard}. 
However, if $\weld$ is quasisymmetric, then the solution to the conformal welding problem exists and is unique  \cite{Beurling-Ahlfors}. The corresponding Jordan curves are called \emph{quasicircles}, and we denote the group of quasisymmetric homeomorphisms as 
\begin{align*}
    \QS(\RP^1): =  \Big \{ h\in \Homeo_+(\RP^1)   \,|\, & \exists M > 1, \, \forall \t \in \m R, \, \forall t \in (0,\pi), \,  \\
    & \frac1M \le \abs{\frac{\varphi(e^{\ii (\t + t)}) -\varphi( e^{\ii \t})}{\varphi(e^{\ii \t} )- \varphi(e^{\ii (\t-t)})}} \le M \Big\}
\end{align*}
where we have identified $\RP^1$ with the unit circle in $\m C$. In this article, all homeomorphisms of $\RP^1$ we consider are quasisymmetric.

We will explain that one should consider the welding homeomorphism $\weld_\g$ as a positive curve on the boundary of $\AdS^3$ --- Anti-de Sitter $3$-space --- so that the $\PSL(2,\m R) \times \PSL(2,\m R)$ action \eqref{eq:equivariant_welding} on $\weld_\g$ coincides with the orientation-preserving and time-preserving isometries of $\AdS^3$. This approach was taken in \cite{Mess07}, which gives a geometric interpretation of Thurston's earthquake theorem. See also \cite{Bonsante_Seppi,diaf2023antide}. 
Similarly, $\Chat$ is the conformal boundary of $\m H^3$ --- hyperbolic $3$-space --- so that the $\PSL(2,\m C)$ action \eqref{eq:equivariant_curve} coincides with the orientation-preserving isometries of $\m H^3$. 
Thus, conformal welding establishes a canonical correspondence between Jordan curves on $\partial_\infty \m H^3$ and positive curves on $\partial_\infty\!\AdS^3$ up to isometries. See Section~\ref{sec:H_AdS} for more details.

In recent years, the Loewner energy --- a quantity associated with each Jordan curve --- has caught a lot of attention for its close ties to random conformal geometry, Grunsky operators, universal Teichm\"uller space, and hyperbolic $3$-space \cite{W2,VW1,VW2,TT06,johansson2021strong,johansson2023coulomb,epstein_yilin,Bishop_WP,carfagnini_wang}.
Its definition first arises from the study of Schramm--Loewner evolutions \cite{W1, RW} and is invariant under the $\PSL(2,\m C)$ action \eqref{eq:equivariant_curve}. 
 Moreover, the Loewner energy coincides with a multiple of the universal Liouville action \cite{W2} introduced by Takhtajan and Teo \cite{TT06}, defined as a function on the associated welding homeomorphisms.  Takhtajan and Teo also showed that  the universal Liouville action is a K\"ahler potential of the unique homogeneous K\"ahler metric of the universal Teichm\"uller space $T(1) = \PSL(2, \m R) \backslash \QS (\m {RP}^1)$. (To be more precise, the K\"ahler potential is defined on the connected component $T_0(1)$ of $T(1)$ containing  the origin $[\Id_{\m {RP}^1}]$.)
Therefore,  it is natural to view the Loewner energy as a functional of the equivalence classes of Jordan curves 
and also of their corresponding classes of welding homeomorphisms.  See Section~\ref{sec:def_Loewner} for more details. See also \cite{W_AMS} for an expository article on the identity between Loewner energy and the universal Liouville action.


This paper aims to make the following observations about two optimization problems of the Loewner energy and compare them side-by-side. We will also provide a minimal background about hyperbolic space, AdS space, and the Loewner energy.

\begin{thm}[Optimizing the curve \cite{MRW1,MRW2}] \label{thm:opt_1}
    Let $z_1, \cdots, z_n$ be $n$ distinct points on $\Chat$, $\tau$  be a Jordan curve passing through these points. There exists a unique Jordan curve $\g$ minimizing the Loewner energy among all curves that are homotopic to $\tau$ relatively to $z_1, \cdots, z_n$. The welding homeomorphism $\weld_\g$ of $\g$ is $C^{1,1}$ and piecewise M\"obius, or equivalently, $\weld_\g$ is a continuously differentiable boundary of a pleated space-like geodesic plane in $\AdS^3$.
\end{thm}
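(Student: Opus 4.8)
The plan is to separate the statement into three parts: (i) existence and uniqueness of the energy minimizer $\g$ in the prescribed homotopy class; (ii) regularity and the piecewise-Möbius structure of $\weld_\g$; (iii) the identification of $\weld_\g$ with the boundary of a pleated space-like plane in $\AdS^3$. Since the theorem is attributed to \cite{MRW1,MRW2}, the body of the paper presumably recalls or reconstructs that argument; here I describe how I would organize it.

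For existence, I would first recall that the Loewner energy is nonnegative, vanishing only on circles, and — crucially — that it is lower semicontinuous with compact sublevel sets in an appropriate topology (e.g.\ the Carathéodory / Hausdorff topology on curves through the marked points, after normalizing the $\PSL(2,\m C)$ ambiguity \eqref{eq:equivariant_curve} using three of the points $z_1,z_2,z_3$). A minimizing sequence of curves homotopic to $\tau$ rel $\{z_1,\dots,z_n\}$ then has a subsequential limit $\g$; one must check the limit still passes through all the $z_i$ and stays in the same homotopy class — this is where the marked points must not be allowed to collide in the limit, and one uses that bounded Loewner energy forces a uniform modulus of continuity on the welding (quasisymmetry with a controlled constant), preventing degeneration. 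Lower semicontinuity then gives that $\g$ is a minimizer. Uniqueness I would get from strict convexity: the universal Liouville action is (geodesically) convex along suitable deformations of $T_0(1)$, or more concretely, one shows any second minimizer would, by a cut-and-paste / symmetrization argument on the two arcs between consecutive marked points, produce a curve of strictly smaller energy unless it coincides with $\g$.

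For the regularity and piecewise-Möbius statement, I would set up the Euler--Lagrange equation for the constrained problem. Off the marked points, $\g$ may be perturbed freely, so the first variation of the Loewner energy must vanish there; by the variational formula for the Loewner energy (in terms of the Schwarzian derivative of the welding, or equivalently the Weil--Petersson metric), the vanishing of the first variation on each open arc forces the relevant Schwarzian to vanish on the corresponding arc of $\RP^1$, i.e.\ the welding is Möbius on each of the $n$ complementary arcs. At the marked points the welding need not be smooth but the constraint (the curve is pinned) only permits a one-parameter family of admissible one-sided perturbations, and matching left and right variations yields a $C^1$ matching condition — upgrading to $C^{1,1}$ since a function that is Möbius (hence smooth) on each side of a point and $C^1$ across it has bounded second derivative. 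The main obstacle, I expect, is making the variational argument rigorous: one must justify differentiating the Loewner energy along the constrained family, which requires knowing the minimizer already has enough regularity to carry out the variation (a bootstrap), and one must control the boundary terms at the marked points carefully so that ``admissible variation'' is correctly characterized — this is the technical heart and presumably the content of \cite{MRW1,MRW2}.

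Finally, for the $\AdS^3$ interpretation, I would invoke the dictionary set up in Section~\ref{sec:H_AdS}: a piecewise-Möbius circle homeomorphism with finitely many ``bending'' points corresponds exactly to the left-earthquake (equivalently, the Gauss-map boundary data) of a space-like plane in $\AdS^3$ that is totally geodesic on finitely many pieces and bent along finitely many geodesics — a pleated plane — and the $C^1$ matching condition at the $z_i$ translates precisely into the pleating locus being a lamination with no ``atoms of angle $\pi$'', so that the bending is continuous. Conversely such a pleated plane has a $C^{1,1}$ boundary map. Thus the two descriptions of $\weld_\g$ coincide, completing the proof modulo the facts recalled in Sections~\ref{sec:H_AdS} and \ref{sec:def_Loewner}.
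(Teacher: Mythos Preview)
Your three-part decomposition matches the paper's, but the substance of part (ii) --- and to a lesser extent (i) and (iii) --- differs from what the paper does, and the paper's route is considerably more elementary.

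For the piecewise-M\"obius claim you propose a variational Euler--Lagrange argument: vary the curve freely on each arc between marked points and deduce that a Schwarzian vanishes there. The paper instead uses a one-line cut-and-paste. By additivity (Lemma~\ref{lem:add_energy}),
\[
I^L(\g) = I^A(\g \smallsetminus \g_k) + I^C_{\O_k}(\g_k),
\]
where $\g_k$ is the arc from $z_k$ to $z_{k+1}$ and $\O_k = \Chat \smallsetminus (\g \smallsetminus \g_k)$. The chordal term is uniquely minimized by the hyperbolic geodesic (Remark~\ref{rem:minimizer_geodesic}), so a minimizer must have every $\g_k$ equal to the hyperbolic geodesic in $\O_k$ --- the \emph{geodesic property} (Definition~\ref{df:geod}, Lemma~\ref{lem:minimizer_geodesic}). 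Piecewise-M\"obius welding then drops out immediately (Lemma~\ref{lem:pw_mob}): a single conformal map $\varphi_k : \m C \smallsetminus \m R_- \to \O_k$ with $\varphi_k(\m R_+) = \g_k$ uniformizes both sides of $\g$ simultaneously, so $f = \varphi_k|_{\m H^2} \circ \a_k^{-1}$ and $g = \varphi_k|_{\m H^{2,*}} \circ \b_k^{-1}$ for some $\a_k,\b_k \in \PSL(2,\m R)$, whence $\weld = \b_k \circ \a_k^{-1}$ on $f^{-1}(\g_k)$. The $C^{1,1}$ regularity is not obtained by matching one-sided variations but from Theorem~\ref{thm:shen}: a piecewise-M\"obius $\weld$ with a jump in $\weld'$ fails $\log \weld' \in H^{1/2}$, hence is not Weil--Petersson and has infinite energy; so a finite-energy minimizer's welding is $C^1$, and piecewise-smooth plus $C^1$ gives $C^{1,1}$. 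This completely sidesteps the bootstrap problem you correctly flag; your variational route would additionally need to identify which Schwarzian the first variation controls (it is $\mc S[f^{-1}]$ and $\mc S[g^{-1}]$ along $\g$, not $\mc S[\weld]$ on $\RP^1$, and the passage is not immediate).

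For uniqueness in (i) the paper does not argue via convexity or symmetrization; it cites \cite{MRW2} for the fact that the curve in $\mc L(\mathbf z;\tau)$ with the geodesic property and finite energy is unique (Theorem~\ref{thm:opt_1_unique}), calling this step ``more subtle''. For (iii) the paper does not invoke earthquake theory: it just uses Lemma~\ref{lem:plane_mob} (boundaries of space-like geodesic planes are exactly graphs of elements of $\PSL(2,\m R)$) and builds the pleated plane explicitly by an ideal triangulation with vertices $(x_k,y_k)$.
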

This optimization problem was first studied in \cite{W1}, which pointed out that the minimizers have the geodesic property (see Definition~\ref{df:geod}). See also \cite{peltola_wang,Bonk_Eremenko} for similar geodesic properties in other setups. 
The existence and uniqueness of the minimizer are proved in \cite{MRW2} by showing that there exists a unique Jordan curve with the piecewise geodesic property. 
It was already noticed in \cite{MRW1} that piecewise geodesic Jordan curves have weldings that are piecewise M\"obius. 
We will recall some proofs in Section~\ref{sec:opt_curve}. The only addition here is the formulation of the minimizer in terms of the pleated planes in $\AdS^3$ space. By a \emph{pleated space-like geodesic 
 plane}, we mean a topological disk in $\AdS^3$ that is a union of finitely many subsets of space-like geodesic planes identified along complete geodesics. In particular, its intrinsic metric coincides with that of a space-like geodesic plane. See Figure~\ref{fig:pleated}.

 \begin{figure}
    \centering
    \includegraphics[width=.35\textwidth]{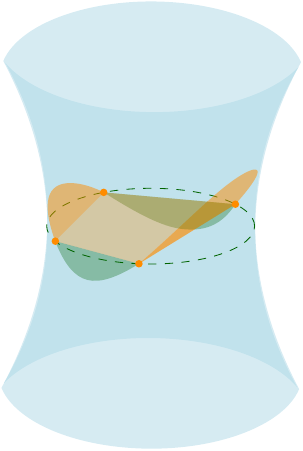}
    \caption{Illustration of a pleated plane in $\AdS^3$  with $C^1$ boundary where $n = 4$. In this case, the four points on $\partial_\infty\!\AdS^3$ are co-planar.}
    \label{fig:pleated}
\end{figure}

We define similarly \emph{pleated geodesic planes} in $\m H^3$ and can state the result for the second optimization problem.

\begin{thm}[Optimizing the welding] \label{thm:opt_2}
     Let $(x_1, \cdots, x_n)$ and $(y_1, \cdots, y_n)$  be two sets of $n$ distinct points of $\RP^1 \simeq S^1$ in counterclockwise order. If $\weld \in \Homeo_+(\RP^1)$ minimizes the Loewner energy among all homeomorphisms sending $x_k$ to $y_k$ for all $k$, then $\weld$ is the welding homeomorphism of a $C^{1,1}$ and piecewise circular curve. Equivalently, any positive curve in $\partial_\infty\!\AdS^3$ minimizing the Loewner energy among all of those passing through given $n$ points is the welding of a continuously differentiable boundary of a pleated geodesic plane in $\m H^3$.
\end{thm}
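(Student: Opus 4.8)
The plan is to follow the strategy behind Theorem~\ref{thm:opt_1} (from \cite{MRW1,MRW2}), but carried out on the welding side rather than the curve side. One first disposes of $n \le 3$: there is an element of $\PSL(2,\m R)$ sending each $x_k$ to $y_k$, so the minimal energy is $0$ and is attained by the welding of a round circle, which is trivially piecewise circular. Assume then $n \ge 4$. The next step is existence of a minimizer. The set of $\weld \in T_0(1)$ with $\weld(x_k) = y_k$ for all $k$ is nonempty because any smooth orientation-preserving diffeomorphism interpolating the data has finite Loewner energy; the sublevel sets of the Loewner energy in $T_0(1)$ are sequentially compact and convergence there implies uniform convergence of homeomorphisms; and the constraints $\weld(x_k) = y_k$ are preserved under uniform limits and, since $n \ge 3$, remove the $\PSL(2,\m R) \times \PSL(2,\m R)$ ambiguity, so that no minimizing sequence escapes. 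Lower semicontinuity of the Loewner energy then produces a minimizer $\weld_*$; since $\weld_*$ is quasisymmetric, the welding problem has a unique solution $\g_*$, which is a Weil--Petersson quasicircle because $\weld_*$ has finite energy.

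The heart of the argument is a localization. Fix the minimizer $\weld_*$ and, for each $k$, perturb it only over the arc $J_k := (x_k, x_{k+1})$ of $\RP^1$ by post-composing with the flow of a vector field on $\RP^1$ supported in the image arc $L_k := (y_k, y_{k+1})$. Every such perturbation preserves all the constraints $\weld(x_j) = y_j$, so the first variation of the Loewner energy must vanish along all of them. Using the formula of Takhtajan--Teo for the first variation of the universal Liouville action (see also \cite{W2, W_AMS}), this turns into an Euler--Lagrange equation for $\weld_*$ on each $J_k$: the density paired against the free vector field on $L_k$ is a Schwarzian-derivative expression in the uniformizing maps of the two complementary domains of $\g_*$, whose vanishing on $L_k$ is precisely the condition that the boundary arc $\g_*([x_k,x_{k+1}])$ lies on a round circle. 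This is the exact counterpart of the computation in \cite{MRW1}, where fixing the \emph{curve} outside an arc and optimizing the arc forces the \emph{welding} to be M\"obius there; here one fixes the \emph{welding} outside $J_k$, and optimizing forces the \emph{curve} to be circular on the corresponding arc. I expect this identification --- converting ``the first variation of the Liouville action vanishes under one-arc welding perturbations'' into ``the curve is circular there'' --- to be the main obstacle, as it requires the precise variational formula and a careful reduction of the resulting condition to circularity. An appealing alternative is to exploit the symmetry of the two settings described in Section~\ref{sec:H_AdS}: conformal welding identifies Jordan curves in $\partial_\infty \m H^3$ with positive curves in $\partial_\infty\!\AdS^3$ while preserving the Loewner energy, so that the present statement is literally Theorem~\ref{thm:opt_1} with the roles of $\m H^3$ and $\AdS^3$ exchanged.

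Once $\g_*$ is known to be a circular arc on each of the $n$ arcs $\g_*([x_k, x_{k+1}])$, the regularity claim is short. A Jordan curve with a genuine corner has infinite Loewner energy, so the finite-energy curve $\g_*$ has no corners and the consecutive circular arcs meet tangentially at the $n$ junction points $\g_*(x_k)$. Hence $\g_*$ is a $C^1$ piecewise circular curve, and since each arc is real-analytic with bounded curvature, $\g_*$ is $C^{1,1}$; the welding of such a curve is $C^{1,1}$ as well. Finally, one translates the conclusion into the language of pleated planes: by the discussion in Section~\ref{sec:H_AdS}, a $C^1$ piecewise circular Jordan curve is exactly the boundary at infinity of a $C^1$ pleated geodesic plane in $\m H^3$ (one spans the totally geodesic plane over each bounding circle and glues the finitely many pieces), and a positive curve in $\partial_\infty\!\AdS^3$ through the $n$ points $(x_k, y_k)$ is the graph of a homeomorphism $\weld$ with $\weld(x_k) = y_k$; combining these gives the two equivalent formulations in the statement.
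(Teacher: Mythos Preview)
Your proposal is a genuine outline but leaves the crucial step unresolved, and the paper's argument is both simpler and complete.

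First, the existence paragraph is unnecessary: the theorem is conditional (``If $h$ minimizes\ldots''), and the paper explicitly declines to assert existence or uniqueness here.

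More importantly, your localization step is not the paper's method and you do not carry it out. You propose computing the first variation of the universal Liouville action under welding perturbations supported on $L_k$, and then identifying the resulting Euler--Lagrange condition with circularity of the corresponding boundary arc; you yourself flag this identification as the main obstacle, and it is not clear that the vanishing of that boundary Schwarzian expression on a single arc forces the arc to be circular rather than merely imposing a matching condition between $\mathcal S[f]$ and $\mathcal S[g]$ there. The paper avoids variational calculus entirely. Instead it uses the additivity formula
\[
I^L(\gamma) = I^A(\gamma_k) + I^C_{\hat{\mathbb C}\smallsetminus \gamma_k}(\gamma\smallsetminus\gamma_k)
\]
and observes that the chordal term on the right depends only on the welding restricted to $\mathbb{RP}^1\smallsetminus(x_k,x_{k+1})$: if $\varphi_k : \hat{\mathbb C}\smallsetminus\gamma_k \to \mathbb H^2$ sends the endpoints of $\gamma_k$ to $0,\infty$, then the chord $\varphi_k(\gamma\smallsetminus\gamma_k)$ is determined by $\{h_i : i\neq k\}$. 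One then builds an \emph{explicit competitor} by replacing the welding across $\gamma_k$ with the trivial one $x\mapsto -x$ on $\mathbb R$ (realized by $z\mapsto z^2$), which makes the new $\gamma_k$ a circular arc while leaving the chordal term unchanged; the paper writes down the resulting homeomorphism and checks it still lies in $\Phi_{\mathbf x,\mathbf y}$. Since $I^A(\text{circular arc})=0$, this gives strictly smaller energy unless $\gamma_k$ was already circular. This is a direct comparison argument, not a first-order condition, and it closes immediately.

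Your suggested alternative---transporting Theorem~\ref{thm:opt_1} across the $\mathbb H^3/\AdS^3$ symmetry---is not available: that correspondence is the conjectural theme of the paper (see the final section), not an established tool one can invoke. The $C^{1,1}$ and pleated-plane paragraphs at the end of your outline are essentially correct and match the paper.
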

This result is proved in Section~\ref{sec:opt_welding}. We do not claim here the existence or uniqueness of the minimizer for this problem, although the existence should follow easily from a compactness argument, and there is strong evidence to believe in the uniqueness. These questions are under investigation and will appear elsewhere as the scope is quite different from this paper's.  A closely related problem of optimizing partial welding was investigated in \cite{mesikepp2022deterministic}, inspiring this work.

The similarity between the two optimization problems appears striking to the author, who finds it worthwhile to note and raise the question about the meaning of this symmetry. We will further comment on other similarities and discuss a few open questions at the end. 

Although we take a different approach here, we also mention the works \cite{Bishop_WP} and \cite{epstein_yilin} in the direction of establishing the correspondence between the Loewner energy of Jordan curves and geometric quantities in $\m H^3$. In \cite{Bishop_WP}, Bishop characterized qualitatively the curves with finite Loewner energy with those who bound minimal surfaces in $\m H^3$ with finite renormalized area.  In \cite{epstein_yilin}, the author and her collaborators established an identity between the Loewner energy of a Jordan curve and the renormalized volume of a hyperbolic $3$-manifold determined by the curve. 

\subsection*{Acknowledgment} The author thanks Fran\c cois Labourie, Tim Mesikepp, Andrea Seppi, and J\'er\'emy Toulisse for helpful discussions, and Steffen Rohde, Jinwoo Sung, Fredrik Viklund, and Catherine Wolfram for comments on an earlier version of this article.   

\section{Hyperbolic and Anti-de Sitter $3$-spaces} \label{sec:H_AdS}
We recall a minimal amount of basic facts about the hyperbolic and Anti-de Sitter spaces for readers unfamiliar with them. Readers who are familiar with these spaces may safely skip this section.  Most proofs are omitted except those that are more relevant to this work. The presentation on the $\AdS^3$ space follows closely to that of \cite{diaf2023antide}. 

\subsection{Hyperbolic $3$-space}
 The hyperboloid model of the hyperbolic $3$-space is given by
$$\m H^3 = \{X  \in \m R^{3,1} \,| \, q_{3,1} (X)= -1\}_{/X \sim - X}$$
where 
$$q_{3,1} ((x_1, x_2, x_3, x_4)) = x_1^2 + x_2^2 + x_3^2 - x_4^2$$
which induces a Riemannian metric on $\m H^3$.  We denote by $\brac{\cdot,\cdot}_{3,1}$ the bilinear form obtained by polarizing $q_{3,1}$.
Since $\{q_{3,1} (X)= -1\}$ has two connected components interchanged by $X \mapsto -X$, the quotient is simply choosing one connected component, or one can define $\m H^3: = \{q_{3,1} (X)= -1, x_4 > 0\}$. 

The hyperboloid model is isometric to the half-space model of the hyperbolic $3$-space
$$\m H^3 = \{(x, y, \eta) \,|\, \eta > 0, x, y \in \m R\}$$ endowed with the Riemannian metric $(\dd x^2 + \dd y^2 + \dd \eta^2)/\eta^2$. The  visual boundary is
$$\partial_\infty \m H^3 = \m C \cup \{\infty\} = \Chat $$
where we identified $(x,y,0) \in \partial_\infty \m H^3$ with $z \in x + \ii y \in \m C$. 
In the hyperboloid model, 
\begin{equation}\label{eq:boundary_H_hyperboloid}
\partial_\infty \m H^3  = \P \{X \in \m R^{3,1} \smallsetminus \{0\}\,|\, q_{3,1} (X) = 0\}
\end{equation}
where $\P$ denotes the projectivization.

\begin{remark}\label{rem:plane_H}
    In the half-space model, the geodesics in $\m H^3$ are the vertical rays and the half circles perpendicular to $\m C$. The totally geodesic planes are given by halfplanes perpendicular to $\m C$ and hemispheres bounded by circles in $\m C$. In the hyperboloid model, the totally geodesic planes are in the form of
\begin{equation}\label{eq:plane_H_hyperboloid}
    \{X \in \m H^3 \,|\, \brac{X,Y}_{3,1} = 0\},
\end{equation}
where $Y \in \m R^{3,1}$ is such that $q_{3,1} (Y) = 1$. 
\end{remark}

Every conformal automorphism of $\Chat$ extends to an orientation-preserving isometry of $\m H^3$, which can be expressed explicitly using quaternions. 
In fact, if we use $z +   \jj \eta \in \m C \oplus \jj \m R_+$ to parametrize $\m H^3$ and let $\begin{psmallmatrix}
    a & b \\ c & d
\end{psmallmatrix} \in \PSL(2,\m C)$, the corresponding M\"obius transformation $z \mapsto (az + b)/(cz+d)$ 
extends to the isometry of $\m H^3$
$$ Z \mapsto (aZ + b) (cZ+d)^{-1}, \quad \forall Z = z + \jj \eta $$
using the multiplication rules on quaternions, see \cite[Sec.\,2.1]{Ahlfors_Mobius}. Conversely, the boundary values of an orientation-preserving isometry of $\m H^3$ is a conformal automorphism of $\Chat$. Hence, we have
\begin{equation}
    \PSL(2,\m C) \simeq \Isom_+ (\m H^3)
\end{equation}
where $\Isom_+ (\m H^3)$ denotes the set of orientation-preserving isometries of $\m H^3$.
As a sanity check, we note that M\"obius transformations map circles in $\Chat$ to circles (we always consider lines as circles). This is consistent with the fact that isometries of $\m H^3$ map geodesic planes to geodesic planes.

\subsection{Anti-de Sitter $3$-space}
The $\AdS^3$ space is a Lorentzian manifold of signature $(2,1)$ defined similarly to the hyperboloid model of $\m H^3$ (that is why physicists often call $\m H^3$ the Euclidean\footnote{``Euclidean'' simply means that the signature is $(3,0)$.} $\AdS$ space), 
$$\AdS^3 = \{X  \in \m R^{2,2} \,| \, q_{2,2} (X)= -1\}_{/X = - X}$$
where 
$$q_{2,2} (x_1, x_2, x_3, x_4) = x_1^2 + x_2^2 - x_3^2 - x_4^2.$$

We can model $\m R^{2,2}$ using the space $M_2$ of  $2$ by $2$ real matrices and will only use this model in the sequel.  In fact, 
$$q(A) : = - \det (A), \qquad \forall A \in M_2$$
is a quadratic form of signature $(2,2)$. To see this, we note that the associated  bilinear form is 
$$\brac{A,B}_{2,2} = - \frac{1}{2} \tr (A \cdot \adj(B)) \quad
\text{ where }
\quad
\adj \begin{pmatrix}
    a & b \\ c & d
\end{pmatrix} =  \begin{pmatrix}
    d & -b \\ -c & a
\end{pmatrix},  $$
and an orthogonal basis is given by 
\begin{equation}\label{eq:basis}
    \Id = \begin{pmatrix}
    1 & 0 \\ 0 & 1
\end{pmatrix},\quad V =  \begin{pmatrix}
    0 & 1 \\ 1 & 0
\end{pmatrix}, \quad W =  \begin{pmatrix}
    1 & 0 \\ 0 & -1
\end{pmatrix}, \quad U =  \begin{pmatrix}
    0 & -1 \\ 1 & 0
\end{pmatrix}
\end{equation}
where we have $q (\Id) = q (U) = -1$ and $q (V) = q (W) = 1$. In this model,
$$\AdS^3 = \{ A \in M_2 \,|\, \det (A) = 1\}_{/A \sim - A} = \PSL(2,\m R).$$
It turns out that the group of orientation-preserving and time-preserving isometries of $\AdS^3$ is the group $\PSL(2,\m R) \times \PSL(2,\m R)$ acting as:
\begin{equation}\label{eq:isometry_AdS}
    (\a,\b) \cdot A  : = \a A \b^{-1}. 
\end{equation}
We refer the readers to \cite{diaf2023antide} for more details.

The visual boundary of $\AdS^3$ is identified in the same way as in \eqref{eq:boundary_H_hyperboloid}:
\begin{equation*}
\partial_\infty\!\AdS^3  = \P \{A \in M_2 \smallsetminus \{0\}\,|\, -\det (A) = 0\} = \P\{ A \in M_2 \,|\, A \text{ has rank }1\}
\end{equation*}
and we endow $\ad{\AdS^3} = \AdS^3 \cup \,\partial_\infty\!\AdS^3$ with the topology induced from $\P\!M_2$.
We parametrize $\partial_\infty\!\AdS^3$ by $\RP^1 \times \RP^1$ by the following map:
\begin{equation}\label{eq:delta}
    \delta \colon  \partial_\infty\!\AdS^3 \to \RP^1 \times \RP^1 \qquad [A] \mapsto (\Im (A), \Ker (A))
\end{equation}
where  $\Im (A)$ is the  image and $\Ker(A)$ is the  kernel of $A$. Here we identified $\RP^1$ with the space of one-dimensional subspaces of $\m R^2$. We will not explicitly mention  $\d$ when we identify $\partial_\infty\!\AdS^3$ with $\RP^1 \times \RP^1$.

There is a concrete description of the convergence to a point in $\partial_\infty\!\AdS^3$.

\begin{lem}[See {\cite[Lem.\,3.2.2]{Bonsante_Seppi}}] \label{lem:convergence}
      A sequence $(A_n)_{n \ge 0}$ in $\AdS^3$ converges to $A = (x,y) \in \partial_\infty\!\AdS^3$ if and only if $A_n (z) \to x$ and $A_n^{-1} (z) \to y$ for some $z \in \m H^2$ \textnormal(which also implies the limit holds for all $z \in \m H^2$\textnormal). Here $A_n (z)$ refers to the $\PSL(2,\m R)$ action by $A_n$ on $\m H^2 = \{z \in \m C\,|\, \Im (z) > 0\}$. 
\end{lem}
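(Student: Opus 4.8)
The plan is to work entirely in the projective model and turn the projective convergence in $\ad{\AdS^3} \subset \P M_2$ into asymptotics of the $\PSL(2,\m R)$-action on $\m H^2$. The elementary fact I would isolate first is that a rank-one $2 \times 2$ matrix is determined up to a nonzero scalar by its image line and its kernel line; this makes the map $\delta$ of \eqref{eq:delta} a bijection and shows that a boundary point $A = (x,y)$ corresponds to the class $[B] \in \P M_2$ of the unique (up to scaling) rank-one matrix $B$ with $\Im(B) = x$, $\Ker(B) = y$. Since the topology on $\ad{\AdS^3}$ is induced from $\P M_2$, the convergence $A_n \to A$ means exactly that the normalized matrices $\hat A_n := A_n/\norm{A_n}$, lying on the unit sphere of $M_2$, subconverge only to scalar multiples of $B$.

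For the forward direction, I would compute the limit of the M\"obius action along a sequence whose normalizations $\hat A_n$ converge to a rank-one matrix $\hat B$, factored as $\hat B = u v^T$ with $u, v \in \m R^2 \smallsetminus \{0\}$, so $\Im(\hat B) = [u]$ and $\Ker(\hat B) = v^\perp$. Reading the action in homogeneous coordinates, $A_n(z)$ is the class of $A_n (z, 1)^T$ in $\m C \cup \{\infty\}$; normalizing by $\norm{A_n}$ and letting $n \to \infty$, the vector $(z,1)^T$ is sent by $\hat B = u v^T$ to $u\,(v_1 z + v_2)$. Since $v_1 z + v_2$ has only a real zero, it is nonzero for $z \in \m H^2$, so this class equals $[u] = \Im(\hat B)$ for every $z \in \m H^2$, independently of $z$. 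Via \eqref{eq:delta} this reads $A_n(z) \to x$ for all $z$, which already yields the parenthetical claim. For $A_n^{-1}$ one runs the same computation on $A_n^{-1} = \adj(A_n)$ (using $\det A_n = 1$): as $\adj$ is linear on $2\times 2$ matrices and $\norm{\adj A_n} = \norm{A_n}$, the normalization of $A_n^{-1}$ is $\adj(\hat A_n) \to \adj(\hat B)$, a rank-one matrix with $\Im(\adj \hat B) = \Ker(\hat B)$; hence $A_n^{-1}(z) \to y$.

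For the converse, assume $A_n(z_0) \to x$ and $A_n^{-1}(z_0) \to y$ for some $z_0 \in \m H^2$. I would first show $\norm{A_n} \to \pinf$: if not, a subsequence stays bounded and, having determinant $1$, subconverges in $M_2$ to some $A_\infty \in \SL(2,\m R)$, which would force $A_n(z_0)$ to subconverge to the interior point $A_\infty(z_0) \in \m H^2$, contradicting $A_n(z_0) \to x \in \RP^1 = \partial_\infty \m H^2$. With $\norm{A_n} \to \pinf$, the $\hat A_n$ live on a compact sphere, so any subsequence has a further limit $\hat B$ with $\norm{\hat B} = 1$; as $\det \hat A_n = 1/\norm{A_n}^2 \to 0$, this $\hat B$ is rank one. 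The forward computation applied along that subsequence shows $A_{n_k}(z_0) \to \Im(\hat B)$ and $A_{n_k}^{-1}(z_0) \to \Ker(\hat B)$, which compared with the hypotheses forces $\Im(\hat B) = x$ and $\Ker(\hat B) = y$; so by the uniqueness from the first step $[\hat B] = [B] = A$, independently of the chosen subsequence. Therefore $\hat A_n$ converges projectively to $[B]$, i.e. $A_n \to A$ in $\ad{\AdS^3}$.

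The main obstacle is bookkeeping rather than conceptual: correctly matching the collapsed value of the degenerate M\"obius action to the right coordinate of $\delta$ --- image for $A_n$, kernel for $A_n^{-1}$ --- and controlling the scalar and sign ambiguities inherent in working in $\P M_2$ and with $\PSL$ rather than $\SL$. Both are tamed by the two algebraic identities above, namely the reconstruction of a rank-one matrix from its image and kernel and the relation $\Im(\adj B) = \Ker(B)$; once these are in place the argument is a clean compactness-and-unique-limit one, and the ``for some $z$'' versus ``for all $z$'' equivalence falls out for free by chaining the two directions.
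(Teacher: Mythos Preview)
The paper does not give its own proof of this lemma; it is stated with a reference to \cite[Lem.\,3.2.2]{Bonsante_Seppi} and used as a black box. So there is no in-paper argument to compare against.

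Your proposal is correct and self-contained. The forward direction is clean: normalizing $A_n$ on the unit sphere of $M_2$ and using that projective convergence forces the limit $\hat B$ to be rank one with prescribed image and kernel, you correctly read off the degenerate M\"obius action as collapsing $\m H^2$ to $\Im(\hat B)$; the kernel side for $A_n^{-1}$ via $A_n^{-1}=\adj(A_n)$ and $\Im(\adj \hat B)=\Ker(\hat B)$ is the right trick. The converse is also sound: the unboundedness of $\norm{A_n}$ is forced because a bounded subsequence would keep $A_n(z_0)$ in the interior, and then the subsequential-limit-plus-uniqueness argument pins down $[\hat B]$. The ``some $z$'' versus ``all $z$'' equivalence indeed follows by composing the two directions. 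The only cosmetic point is to fix once and for all which norm you use so that $\norm{\adj A}=\norm{A}$ is literally true (Frobenius works); nothing in the argument depends on this beyond convenience.
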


It is immediate that the action \eqref{eq:isometry_AdS} of $\PSL(2,\m R) \times \PSL(2,\m R)$  extends to $\partial_\infty\!\AdS^3$:
\begin{align}\label{eq:action_b_AdS} 
      (\alpha, \beta)\cdot(\Im (A), \Ker (A)) &=(\Im (\alpha A \beta^{-1}), \Ker (\alpha  A \beta^{-1})) \nonumber \\
      &= (\alpha \Im (A), \beta \Ker (A)).
\end{align} 
This implies immediately the following lemma which shows that it is very natural to encode welding homeomorphisms in $\partial_\infty\!\AdS^3$. 
\begin{lem}\label{lem:covariant_varphi}
  Let  $\weld: \RP^1 \to \RP^1$ be an orientation-preserving circle homeomorphism. The graph of $\weld$ 
$$\gr(\weld) := \{(x, \weld(x)) \,|\, x\in \RP^{1}\} \subset \partial_\infty\! \AdS^3$$  
is a positive curve in $\partial_\infty\!\AdS^3$. The action \eqref{eq:action_b_AdS} of orientation-preserving and time-preserving isometries of $\AdS^3$ on the graph coincides with the action \eqref{eq:equivariant_welding} on $\weld$:
$$(\a,\b) \cdot  \gr(\weld)  = \gr (\b \weld \a^{-1}).$$
\end{lem}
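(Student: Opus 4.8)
The plan is to split the statement into two parts: first that $\gr(\weld)$ is a positive curve, and second that the isometry action on graphs realizes the double $\PSL(2,\m R)$-action on weldings. For the second part, which is essentially formal, I would simply apply \eqref{eq:action_b_AdS}. A point of $\gr(\weld)$ is a pair $(x, \weld(x)) \in \RP^1 \times \RP^1$, and under $(\a,\b)$ it goes to $(\a x, \b \weld(x))$. Setting $x' = \a x$, the image point is $(x', \b \weld(\a^{-1} x'))$, so as $x'$ ranges over $\RP^1$ the image set is exactly $\{(x', (\b \circ \weld \circ \a^{-1})(x')) \mid x' \in \RP^1\} = \gr(\b \weld \a^{-1})$. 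This matches the equivalence relation \eqref{eq:equivariant_welding}, modulo checking that $\b \weld \a^{-1}$ is again an orientation-preserving homeomorphism, which holds since each factor is.

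For the first part I need to recall (from Section~\ref{sec:H_AdS}, which the excerpt says contains the relevant background on $\AdS^3$) the definition of a \emph{positive} (also called \emph{space-like} or \emph{achronal}, with a positivity/orientation condition) curve in $\partial_\infty\!\AdS^3 \simeq \RP^1 \times \RP^1$. The natural characterization is that a curve is positive precisely when it is the graph of an orientation-preserving homeomorphism $\RP^1 \to \RP^1$: in the $\RP^1 \times \RP^1$ picture, the two null foliations of $\partial_\infty\!\AdS^3$ are the horizontal and vertical fibers, and a curve that meets each fiber exactly once and does so in an orientation-compatible way is exactly such a graph. So this half of the lemma is really a matter of matching the definition of positive curve with the graph condition; I would state the definition used and observe that $\gr(\weld)$ meets each $\{x\} \times \RP^1$ and each $\RP^1 \times \{y\}$ in a single point (by $\weld$ being a bijection), with the orientation induced from $\RP^1$ on the first coordinate agreeing with the time orientation.

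The main obstacle, such as it is, is purely bookkeeping about conventions: making sure the notion of ``positive curve'' as set up earlier in the paper is literally the graph-of-an-orientation-preserving-homeomorphism condition, and that ``orientation-preserving and time-preserving isometries'' act as in \eqref{eq:action_b_AdS} rather than with some transpose or inverse that would flip the composition order. Once those conventions are pinned down the proof is two or three lines. I would therefore present it tersely: cite the characterization of positive curves, note $\gr(\weld)$ satisfies it because $\weld \in \Homeo_+(\RP^1)$, and then do the one-line substitution computation above to get $(\a,\b)\cdot\gr(\weld) = \gr(\b\weld\a^{-1})$.
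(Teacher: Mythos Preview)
Your computation for the isometry action is correct and is exactly what the paper does: it prefaces the lemma with ``This implies immediately the following lemma'', meaning the substitution $(x,\weld(x)) \mapsto (\a x, \b \weld(x)) = (x', \b\weld\a^{-1}(x'))$ from \eqref{eq:action_b_AdS} is the entire argument for that half.

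For positivity, however, your guess at the definition does not match the paper's. The paper does \emph{not} define a positive curve as one transverse to the two null foliations of $\partial_\infty\!\AdS^3$. Its definition (stated in the paragraph immediately following the lemma) is that the totally geodesic plane through any three distinct points $(x_i,y_i)_{i=1,2,3}$ of the curve is space-like, and it observes---with a forward reference to Lemma~\ref{lem:plane_mob}---that this is equivalent to the existence of $\a \in \PSL(2,\m R)$ with $\a(x_i)=y_i$, i.e., to $(x_1,x_2,x_3)$ and $(y_1,y_2,y_3)$ having the same cyclic order on $\RP^1$. The positivity of $\gr(\weld)$ then follows in one line because $\weld$ is orientation-preserving, so any triple $(x_i,\weld(x_i))$ satisfies this orientation condition. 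Your foliation picture is a legitimate equivalent characterization in the $\AdS$ literature, but the paper never sets it up, so using it would require you to prove the equivalence---which is essentially the content of Lemma~\ref{lem:plane_mob} anyway. The paper's route is shorter here: it reduces positivity to a three-point orientation check rather than a global transversality statement.
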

A closed curve in $\partial_\infty\!\AdS^3$ is said to be \emph{positive} if the totally geodesic plane passing through any three distinct points $(x_i, y_i)_{i = 1,2,3}$ on the curve is space-like (signature = $(2,0)$). As we will see in Lemma~\ref{lem:plane_mob}, the latter condition is equivalent to that there exists $\a \in \PSL(2,\m R)$ such that $\a (x_i) = y_i$ for all $i$, namely, $(x_1,x_2,x_3)$ is in the same orientation as $(y_1, y_2, y_3)$.

Now, we describe the space-like totally geodesic planes in $\AdS^3$. Similar to \eqref{eq:plane_H_hyperboloid}, they are given by 
\begin{equation}\label{eq:def_geo_plane}
P_{[\a]} = \{[A] \in \PSL(2,\m R) \,|\, \brac{\a,A}_{2,2} = 0\}
\end{equation}
where $\a \in M_2$ with $q (\a) < 0$ (to ensure that $P_{[\a]}$ is space-like). Since it defines the same plane if one replaces $\a$ by $c\a$ for some $c \neq 0$, we use the projective class $[\a]$ instead of $\a$.

\begin{ex}
    When $[\a] = [\Id]$, \eqref{eq:basis} shows that 
    $$P_{[\Id]} = \{A \in \Span\{U,V,W\} \,|\, q(A) = -1\}_{/A \sim -A} $$
   which is isometric to $\m H^2$ since $q (V) = q(W) = 1$ and $q (U) = -1$ and $U,V,W$ are orthogonal. Since $P_{[\a]} = \a P_{[\Id]} = (\a, \Id) \cdot P_{[\Id]}$ by \eqref{eq:isometry_AdS} and \eqref{eq:def_geo_plane}, $P_{[\a]}$ is also isometric to $\m H^2$.
\end{ex}
Let us recall the simple proof of the following statement. 
\begin{lem}\label{lem:plane_mob}
  Let $\a \in \PSL(2, \m R)$.   The boundary of $P_{[\a]}$ is $\gr (\a^{-1})$. In the latter, $\a$ is viewed as the M\"obius transformation of $\RP^1$ as in \eqref{eq:action_RP1}. In other words, the graphs of M\"obius transformations of $\RP^1$ are exactly the boundaries of space-like totally geodesic planes in $\AdS^3$.
\end{lem}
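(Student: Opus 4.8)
The plan is to compute the boundary of $P_{[\a]}$ directly from the definition \eqref{eq:def_geo_plane} together with the convergence criterion of Lemma~\ref{lem:convergence}. First I would treat the base case $[\a] = [\Id]$: here $P_{[\Id]} = \{A \in \Span\{U,V,W\}\,|\, q(A) = -1\}_{/\pm}$ as in the Example, and a sequence $A_n \in P_{[\Id]}$ with $A_n = x_n U + y_n V + z_n W$ escaping to infinity must have $x_n^2 - y_n^2 - z_n^2 = -1$ with $(y_n, z_n) \to \infty$. Writing $A_n$ as a $2\times 2$ matrix $\begin{psmallmatrix} x_n + z_n & y_n - x_n \\ y_n + x_n & x_n - z_n\end{psmallmatrix}$ and rescaling, in the limit one gets a rank-one matrix whose image and kernel I can read off; the point is that for $A \in \Span\{U,V,W\}$ one has $\adj(A) = -A^{t}$ (equivalently $A + \adj(A) = (\tr A)\,\Id = 0$), so $A$ is \emph{trace-free}, hence $\Im(A) = \Ker(A^{t}) = \Ker(\adj(A))^{\perp}\!$-type relations force $\Im(A) = \Ker(A)$ in the rank-one limit. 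Thus $\partial_\infty P_{[\Id]} = \{(x,x)\,|\, x \in \RP^1\} = \gr(\Id_{\RP^1}) = \gr(\Id^{-1})$, the diagonal.

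The general case then follows by equivariance. Since $P_{[\a]} = \a\, P_{[\Id]} = (\a,\Id)\cdot P_{[\Id]}$ by \eqref{eq:isometry_AdS} and \eqref{eq:def_geo_plane} (as already noted in the Example), and since the $\PSL(2,\m R)\times\PSL(2,\m R)$ action extends continuously to $\ad{\AdS^3}$, we get $\partial_\infty P_{[\a]} = (\a, \Id) \cdot \partial_\infty P_{[\Id]}$. Applying \eqref{eq:action_b_AdS} to the diagonal gives $(\a,\Id)\cdot\{(x,x)\} = \{(\a x, x)\,|\, x \in \RP^1\}$. Reparametrizing by $x = \a^{-1} u$, this is $\{(u, \a^{-1} u)\,|\, u \in \RP^1\} = \gr(\a^{-1})$, which is the claim. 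The final sentence of the statement ("the graphs of \Mo transformations are exactly the boundaries of space-like totally geodesic planes") then follows because $[\a] \mapsto P_{[\a]}$ is a bijection between $\{[\a] : q(\a) < 0\}$ and space-like geodesic planes, and $\a \mapsto \a^{-1}$ is a bijection of $\PSL(2,\m R)$ onto itself, matching the two families.

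The only genuinely delicate point is the base-case boundary computation: one must check that the diagonal is \emph{exactly} the boundary, i.e. every boundary point of $P_{[\Id]}$ is of the form $(x,x)$ and conversely every such point is a limit of points in $P_{[\Id]}$. The first inclusion is the trace-free rank-one-matrix argument above; for the converse, given $x \in \RP^1$ one writes down an explicit one-parameter family in $\Span\{U,V,W\}\cap\{q=-1\}$ degenerating to the rank-one matrix with image and kernel both equal to $x$ (for instance, conjugating a fixed escaping geodesic ray by the rotation/boost fixing $x$), and invokes Lemma~\ref{lem:convergence}. This is a short explicit computation rather than a real obstacle; everything else is formal manipulation of the group action.
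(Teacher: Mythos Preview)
Your overall architecture matches the paper's exactly: handle the base case $P_{[\Id]}$ first, then transport by the left action $(\a,\Id)$ and invoke Lemma~\ref{lem:covariant_varphi} (equivalently \eqref{eq:action_b_AdS}) to get $\partial_\infty P_{[\a]} = \gr(\a^{-1})$. The difference is only in how the base case is dispatched. You argue directly that a projective limit of trace-free matrices is a rank-one trace-free matrix, and that for such a matrix $\Im(A) = \Ker(A)$ (this is correct: if $A = uv^{t}$ with $\tr A = v^{t}u = 0$ then $u \in v^\perp = \Ker A$). The paper instead observes via Cayley--Hamilton that $\tr A = 0$ and $\det A = 1$ force $A^2 = -\Id$, hence $A_n^{-1} = -A_n$ in $\PSL(2,\m R)$, and then Lemma~\ref{lem:convergence} gives $x = y$ in one line; for the converse it uses transitivity of the conjugation action rather than an explicit escaping family. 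Both arguments are elementary; the paper's is a little slicker and avoids coordinates. Two small slips in your write-up: the hyperboloid condition should read $x_n^2 - y_n^2 - z_n^2 = 1$ (not $-1$), your matrix for $x_n U + y_n V + z_n W$ has the wrong diagonal (it should be trace-free, i.e.\ $\begin{psmallmatrix} z_n & y_n - x_n \\ y_n + x_n & -z_n \end{psmallmatrix}$), and the claim $\adj(A) = -A^{t}$ is not what you need---the correct (and sufficient) identity is $\adj(A) = -A$ for trace-free $A$, which you in fact state parenthetically.
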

\begin{proof}
We show first that the boundary of $P_{[\Id]}$ is $\{(x,x) \,|\, x \in \RP^1\} = \gr (\Id_{\RP^1})$. 
For this,    we note that a matrix $A \in \Span \{U,V,W\}$ with $q(A) = -1$ if and only if $\tr(A) = 0$ and $\det (A) = 1$. 
These two conditions are equivalent to that the characteristic polynomial of $A$ is $X^2 + 1$. We have by Cayley--Hamilton theorem that these conditions are equivalent to $A^2 = -\Id$. 
So we have
    $$P_{[\Id]} = \{A \in M_2 \,|\, A^2 = - \Id\}/_{A \sim -A}.$$
   Now we describe the boundary of $P_{[\Id]}$. Let $A_n$ be a sequence in $P_{[\Id]} \subset \AdS^3$ converging to a boundary point $(x,y) \in \partial_\infty P_{[\Id]}$. From Lemma~\ref{lem:convergence}, we have for any $z \in \m H^2$,
    $$ A_n (z) \xrightarrow[]{n\to \infty} x, \quad A_n^{-1} (z) \xrightarrow[]{n\to \infty} y.$$
    On the other hand, $A_n^2 = - \Id$ implies that $A_n^{-1} (z) = - A_n (z) = A_n (z)$. We obtain $x = y$. To see that any point $(x,x)$ belongs to $\partial_\infty P_{[\Id]}$, we note that $\PSL(2,\m R)$ acts transitively by conjugation on $P_{[\Id]}$, which induces a transitive action on $\{(x,x) \,|\, x \in \RP^1\} = \gr (\Id_{\RP^1})$.

    Now let $\a \in \PSL(2,\m R)$, $P_{[\a]} =\a P_{[\Id]}  = (\a, \Id) \cdot P_{[\Id]}$. Lemma~\ref{lem:covariant_varphi} and the previous computation show that 
    $$\partial_\infty P_{[\a]} =(\a, \Id) \cdot  \partial_\infty P_{[\Id]} =  (\a, \Id) \cdot \gr(\Id_{\RP^1}) = \gr (\a^{-1})$$
    as claimed.
\end{proof}

\section{Optimizing the Loewner energy}
\subsection{Definition of the Loewner energy}\label{sec:def_Loewner}
The Loewner energy $I^L: \{\text{Jordan curves}\} \to [0,\infty]$ was introduced in \cite{RW} to measure the roundness of a Jordan curve. An important feature of the Loewner energy is its invariance under M\"obius transformations of $\Chat$. So $I^L$ is well defined on the equivalence classes $\PSL(2,\m C) \backslash \{\text{Jordan curves}\}$. 

We will give two equivalent definitions of Loewner energy. 
The first definition uses the \emph{Loewner chain}. From this definition, the claimed properties of the minimizers in Theorems~\ref{thm:opt_1} and \ref{thm:opt_2} follow immediately, as we will discuss in the next sections. The second definition connects to the Weil--Petersson metric on universal Teichm\"uller space and will only be discussed briefly. 

Let us first explain the notion of \emph{chordal Loewner driving function}.  We say that $\g$ is a {\em simple curve} in $(D;\bpt,\ept)$, if $D$ is a simply connected domain, $\bpt, \ept$ are two distinct prime ends of  $D$, and $\g$ has a continuous and injective parametrization $(0,T) \to D$ such that $\g(t) \to \bpt$ as $t \to 0$ and $\g(t) \to c \in D \cup \{\ept\}$ as $t \to T$. If $c = \ept$ then we say $\g$ is a {\em chord} in $(D;\bpt,\ept)$.
The driving function of a curve/chord in $(\domain;\bpt,\ept) $ is defined via the conformal map $(\domain;\bpt,\ept) \to (\m H^2; 0,\infty)$.
Therefore, it suffices to consider a simple curve/chord $\g$ in $(\m H^2; 0,\infty)$. 

We first parameterize $\g$ by the \emph{halfplane capacity}. More precisely, $\g$ is continuously parametrized by $[0,T)$, where $T \in (0, \infty]$ as above, 
in the way such that for all $t \in [0,T)$,
the unique conformal map $g_t$ from $\m H^2 \smallsetminus \gamma_{[0,t]}$ onto $\m H^2$ with the expansion at infinity $g_t (z)=  z + o(1)$ has the next order term $2t/z$, namely,
\begin{equation}\label{eq:hydro}
g_t (z)= z + \frac{2t}{ z} + o\left( \frac{1}{z}\right).
\end{equation}
The coefficient $2t$ is called the \emph{halfplane capacity} of $\g_{[0,t]}$.
It is easy to  show that $g_t$ can be continuously extended to the boundary point $\gamma_t$ and that the real-valued function $W_t := g_t ( \gamma_t)$ is continuous with $W_0 = 0$ (i.e., $W \in C^0[0,T)$). This function $W$ is called the \emph{driving function} of $\g$ and $2T$ the total capacity of $\g$.

\begin{remark}\label{rem:pathological}
There exist chords with total capacity $2T <\infty$. This happens only when $\g$ goes to infinity while staying close to the real line \cite[Thm.\,1]{LLN_capacity}. 
\end{remark}

The following properties of the Loewner driving function follow directly from the definition and are important.
\begin{itemize}
\item The imaginary axis $\ii \m R_+$ is a chord in $(\m H^2; 0, \infty)$ whose driving function is the zero function defined on $\m R_+$.
\item (\emph{Additivity}) Let $W_\cdot : [0,T) \to \m R$ be the Loewner driving function of a simple curve/chord $\g$ in $(\m H^2; 0,\infty)$. For every $0< s < T$, the driving function of $(g_s(\g[s,T]) - W_s)_{t \in [0,T-s)}$ is $ t \mapsto W_{s+t} - W_s$.
\item (\emph{Scaling property}) Fix $\l >0$, the driving function of the scaled and capacity-reparameterized simple curve/chord $(\l \g_{\l^{-2}t})_{t  \in [0, \l^2 T)}$ is  $t\mapsto \l W _{\l^{-2}t}$.
\end{itemize}

\begin{df}\label{df:chordalLE}
The \emph{chordal Loewner energy} of a simple curve/chord $\g$ in $(D;\bpt,\ept)$ is defined as the Dirichlet energy  of its driving function,
\begin{equation}\label{eq_LE}
    I^C_{D;\bpt, \ept} (\g) := I^C_{\m H; 0 ,\infty}(\varphi (\g)) : = I_{T} (W)  := \frac12 \int_{0}^{T} \left(\frac{\dd W_t}{\dd t}\right)^2 \dd t,
\end{equation} 
if $t \mapsto  W_t$ is absolutely continuous.  Otherwise, we set $I_{T} (W) = \infty$.
Here, $\varphi$ is any conformal map from $(D;\bpt,\ept)$ to $(\m H;0,\infty)$,
$W$ is the driving function of $\varphi (\g)$, $2T$ is the total capacity of $\varphi(\g)$. Moreover, if $\g$ is a chord and $I^C_{D;\bpt, \ept} (\g)<\infty$, then $T = \infty$, see, e.g., \cite[Thm.\,2.4]{yilin_survey}.
\end{df}
Note that the definition of $I^C_{D;\bpt, \ept} (\g)$ does not depend on the choice of $\varphi$. In fact, different choices differ only by post-composing by scaling. From the scaling property of the Loewner driving function, $W$ changes to $t\mapsto \l W_{\l^{-2} t}$ for some $\l > 0$, which has the same Dirichlet energy in its domain of definition.
\begin{remark}\label{rem:minimizer_geodesic}
The Loewner energy $I^C_{D;\bpt, \ept}$ is non-negative and only minimized  by the 
hyperbolic geodesic $\eta$ in $(\domain;\bpt, \ept)$ since the driving function of $\varphi(\eta)$ is the constant zero function.
We sometimes omit $a,b$ from $I^C_{D;\bpt, \ept} (\g)$ when $\g$ is a chord; in this case,  the two marked points are always taken to be the two end points of $\g$. Moreover, as shown in \cite{W1}, the Loewner energy of a chord is reversible, so we do not specify the order of the end points either. 
\end{remark}

Finally, we define the Loewner energy for a Jordan curve.
\begin{df}\label{df:Loewner_energy_jordan}
     Let $\g: [0,1] \to \Chat$ be a continuously parametrized Jordan curve with the marked point  (called \emph{root}) $\g (0) = \g(1)$. 
For every $\vare>0$, $\g [\vare, 1]$ is a chord connecting $\g(\vare)$ to $\g (1)$ in the simply connected domain $\Chat \smallsetminus \g[0, \vare]$.
The \emph{Loewner  energy} of $\g$ is defined as
$$I^L(\g): = \lim_{\vare \to 0} I^C_{\Chat \smallsetminus \g[0, \vare]} (\g[\vare, 1]) \in [0,\infty].$$
We define the \emph{Loewner energy of an arc} similarly.  Let $\g : [0,1] \to \Chat$ be a continuously parametrized simple arc with the marked point $\g(0)$ ($\neq \g(1)$). 
For every $\vare>0$, $\g [\vare, 1]$ is a simple curve in the simply connected domain $\Chat \smallsetminus \g[0, \vare]$ with finite capacity.
The Loewner  energy of $\g$ is 
$$I^A(\g): = \lim_{\vare \to 0} I^C_{\Chat \smallsetminus \g[0, \vare]; \g(\vare), \g(0)} (\g[\vare, 1])  \in [0,\infty].$$
\end{df}

     Although not obvious, it is true that the Loewner energy of a Jordan curve does not depend on any particular parametrization of $\g$, in particular, neither on the orientation nor the root, as first proved in \cite{W1,RW}. The following theorem explains these symmetries by providing an equivalent expression of the Loewner energy, which does not involve any parametrization of the Jordan curve. 
     
 As the Loewner energies $I^L$ and $I^A$ are defined using the Loewner driving function, it is immediate that they are $\PSL(2,\m C)$-invariant.  In the next theorem, we assume $\g$ is a Jordan curve that does not pass through $\infty$ without loss of generality.

\begin{thm}[See {\cite[Thm.\,1.4]{W2}}]\label{thm:intro_equiv_energy_WP} \label{thm:main}
Let $\O$ \textnormal(resp., $\O^*$\textnormal) denote the component of $\Chat \smallsetminus \g$ which does not contain $\infty$ \textnormal(resp., which contains $\infty$\textnormal) and $f$ \textnormal(resp., $g$\textnormal) be a conformal map from the unit disk $\m D = \{z \in \Chat  \colon |z| < 1 \}$ onto $\O$ \textnormal(resp., from $\m D^* = \{z \in \Chat \colon |z| >1 \}$ onto $\O^*$\textnormal). We assume further that $g (\infty) = \infty$.
The Loewner energy of $\gamma$ satisfies
       \begin{equation} \label{eq_disk_energy}
   I^L(\gamma) = \frac{1}{\pi} \int_{\m D} \abs{\frac{f''}{f'}}^2 \dd^2 z + \frac{1}{\pi} \int_{\m D^*} \abs{\frac{g''}{g'}}^2 \dd^2 z +4 \log \abs{\frac{f'(0)}{g'(\infty)}}, 
 \end{equation}
 where $g'(\infty):=\lim_{z\to \infty} g'(z)$ and $\dd^2 z$ is the Euclidean area measure.
\end{thm}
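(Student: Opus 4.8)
The plan is to prove the identity (this is Theorem~1.4 of \cite{W2}, equating the driving-function definition of $I^L$ with a normalization of the universal Liouville action of \cite{TT06}, namely the right-hand side of \eqref{eq_disk_energy}) by a Loewner-flow differentiation argument, after a reduction to finite-energy curves. For the reduction: by the characterization of the Weil--Petersson class (\cite{W1,RW} together with \cite{TT06}), $I^L(\g)<\infty$ exactly when $\g$ is a Weil--Petersson quasicircle, equivalently when $\log f'$ and $\log g'$ lie in the Dirichlet spaces of $\m D$ and $\m D^*$; but that is precisely the finiteness condition for the right-hand side of \eqref{eq_disk_energy}. Hence both sides are $+\infty$ together and it suffices to treat finite-energy curves, for which the Loewner driving functions appearing below are absolutely continuous with square-integrable derivative and the relevant conformal maps are regular enough (e.g. $\log$ of their derivatives in the Dirichlet space) for the manipulations that follow. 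I would also record the preliminary cocycle computation, based on $(\log(M\circ h)')'=(M''/M')(h)\cdot h'+(\log h')'$ for a conformal change $M$, showing that the right-hand side of \eqref{eq_disk_energy}, call it $\mathcal{E}(\g)$, is well defined (independent of the admissible normalizations of $f$ and $g$) and invariant under the $\PSL(2,\m C)$-action \eqref{eq:equivariant_curve}.

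For the core argument, use Definition~\ref{df:Loewner_energy_jordan}: for small $\vare$, $\g[\vare,1]$ is a chord in $D_\vare:=\Chat\smallsetminus\g[0,\vare]$ and $I^L(\g)=\lim_{\vare\to0}I^C_{D_\vare}(\g[\vare,1])$. Transporting $(D_\vare;\g(\vare),\g(1))$ to $(\m H^2;0,\infty)$ and running the chordal Loewner evolution $(g_t,W_t)_{t\ge0}$ of the image chord gives $I^C_{D_\vare}(\g[\vare,1])=\tfrac12\int_0^\infty\dot W_t^2\,\dd t$. The heart of the proof is to construct a functional $t\mapsto\mathcal{E}_\vare(t)$ of the time-$t$ configuration --- built from the uniformizing maps of the two components of the complement of the part of the curve not yet swallowed, together with an explicit correction term in their boundary normalizations --- such that
\begin{equation*}
\frac{\dd}{\dd t}\mathcal{E}_\vare(t)=-\tfrac12\dot W_t^2,\qquad \mathcal{E}_\vare(t)\xrightarrow[t\to\infty]{}0,
\end{equation*}
the limit holding because the residual chord converges to the zero-energy chord $\ii\m R_+$. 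Together with the additivity of the chordal energy this forces $\mathcal{E}_\vare(0)=\tfrac12\int_0^\infty\dot W_t^2\,\dd t=I^C_{D_\vare}(\g[\vare,1])$. The derivative identity itself comes from the Loewner ODE $\dot g_t(z)=2/(g_t(z)-W_t)$, the evolution equations it induces for the two uniformizing maps, and Green's theorem: all boundary contributions cancel except one, which is localized at the tip (where $g_t$ takes the value $W_t$) and evaluates to $\tfrac12\dot W_t^2$ by a residue computation.

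It then remains to close the loop, i.e. let $\vare\to0$. Since $D_\vare\smallsetminus\g[\vare,1]=\O\sqcup\O^*$, the two complementary components at time $0$ are exactly $\O$ and $\O^*$; only the boundary normalizations of their uniformizing maps, and the correction term, depend on $\vare$, through the two chord endpoints $\g(0),\g(\vare)$, which merge as $\vare\to0$. Comparing the $\vare$-normalized maps with the standard maps $f,g$ (with $g(\infty)=\infty$) via the same cocycle identity, $\mathcal{E}_\vare(0)$ differs from the two Dirichlet integrals in \eqref{eq_disk_energy} by an explicit quantity whose limit as $\vare\to0$ is precisely the boundary term $4\log|f'(0)/g'(\infty)|$. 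Passing to the limit yields $I^L(\g)=\mathcal{E}(\g)$.

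The main obstacle is the derivative identity $\tfrac{\dd}{\dd t}\mathcal{E}_\vare(t)=-\tfrac12\dot W_t^2$: one must write down the evolution of the conformal maps of the two complementary domains under the Loewner flow, justify differentiating under the integral on the unbounded domain $\m H^2$ against a vector field singular at the boundary point $W_t$ (using finite-energy a priori control of $\log$ of the derivatives, uniformly along the flow), isolate the correct correction term so that $\mathcal{E}_\vare(\infty)=0$, and carry out the residue/contour analysis so that only the $\dot W_t^2$ contribution survives while those at $\infty$ and along $\m R$ cancel. The $\vare\to0$ limit --- controlling the degeneration of the slit domain $D_\vare$ and the emergence of the logarithmic term --- is the second delicate point, whereas the reduction to finite energy is comparatively routine.
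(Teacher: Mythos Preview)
The paper does not prove this theorem; it merely quotes the result from \cite{W2} (note the ``See \cite[Thm.\,1.4]{W2}'' in the header and the absence of any proof in the text). So there is no ``paper's own proof'' to compare against. Your outline is a faithful high-level sketch of the argument in \cite{W2}: reduce to finite energy, run the chordal Loewner flow, differentiate a Liouville-type functional along the flow to obtain $-\tfrac12\dot W_t^2$, and recover the boundary term in the $\vare\to 0$ limit. As a sketch this is fine, but be aware that the genuinely hard steps you flag---justifying differentiation under the integral with the singular Loewner vector field, and controlling the $\vare\to 0$ degeneration to extract $4\log|f'(0)/g'(\infty)|$---are where essentially all of the work in \cite{W2} lies, and your proposal does not yet supply the estimates that make them go through.
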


The right-hand side of \eqref{eq_disk_energy} was introduced by Takhtajan and Teo in \cite{TT06} as a functional associated with the welding $h = g^{-1} \circ f$, and is called the \emph{universal Liouville action}.
Jordan curves with finite Loewner energy are also called \emph{Weil--Petersson quasicircles} (they are a special type of quasicircles), originally defined as the Jordan curves for which $f''/f' \in L^2 (\m D)$. The following result characterizes Weil--Petersson quasicircles via their welding homeomorphisms. 

\begin{thm}[See \cite{shen13}]\label{thm:shen}
    A Jordan curve is Weil--Petersson if and only if its welding homeomorphism $\weld$ satisfies that
    $\log \weld'$ is in the Sobolev $H^{1/2} (S^1)$ space. We call such $\weld$ a \emph{Weil--Petersson circle homeomorphism} and write
    $\WP(\RP^1)$ for the set of all Weil--Petersson circle homeomorphisms.
\end{thm}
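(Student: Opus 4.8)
The plan is to prove the equivalence by translating both conditions into statements about the boundary traces $\log f'|_{S^1}$ and $\log g'|_{S^1}$, and then to transfer between this two--map picture and the single welding $\weld$. Throughout, $\dot H^{1/2}$ denotes the homogeneous Sobolev seminorm $\norm{u}_{\dot H^{1/2}(S^1)}^2=\sum_n\abs{n}\,\abs{\hat u(n)}^2$.

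\emph{Step 1 (the holomorphic Dirichlet identity).} Since $f\colon \m D\to\O$ is conformal, $f'$ is nonvanishing and $\log f'$ has a single-valued holomorphic branch on $\m D$ with $(\log f')'=f''/f'$. Expanding $\log f'=\sum_{n\ge 0}a_nz^n$ and integrating in polar coordinates gives
\[
\frac1\pi\int_{\m D}\abs{\frac{f''}{f'}}^2\,\dd^2 z=\sum_{n\ge1}n\,\abs{a_n}^2=\norm{\log f'|_{S^1}}_{\dot H^{1/2}(S^1)}^2 ,
\]
and the analogous identity holds for $g$ on $\m D^*$ (using $g(\infty)=\infty$, so $\log g'$ is holomorphic up to $\infty$). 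Combined with Theorem~\ref{thm:main}, this shows at once that $\g$ is Weil--Petersson if and only if both $\log f'|_{S^1}$ and $\log g'|_{S^1}$ lie in $H^{1/2}(S^1)$. The theorem thus reduces to the equivalence of this with $\log\weld'\in H^{1/2}(S^1)$, and the bridge is the boundary welding relation: differentiating $f=g\circ\weld$ along $S^1$ and taking moduli yields
\[
\log\weld'=\Re\log f'|_{S^1}-\bigl(\Re\log g'|_{S^1}\bigr)\circ\weld\qquad\text{on }S^1 .
\]

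\emph{Step 2 (the forward direction).} Assume $\g$ is Weil--Petersson. Then $\g$ is in particular a quasicircle, so $\weld$ is quasisymmetric. By a theorem of Nag and Sullivan, every quasisymmetric homeomorphism of $S^1$ acts as a bounded operator on $H^{1/2}(S^1)$ by precomposition $u\mapsto u\circ\weld$. Since $\Re\log g'|_{S^1}\in H^{1/2}$ by Step 1, we get $(\Re\log g')\circ\weld\in H^{1/2}$, and the welding relation then gives $\log\weld'\in H^{1/2}$.

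\emph{Step 3 (the reverse direction, and the main obstacle).} Conversely, assume $\log\weld'\in H^{1/2}(S^1)$. One first checks that such an $\weld$ is quasisymmetric: this uses the exponential integrability of $H^{1/2}$ functions (a Moser--Trudinger estimate), which forces $\weld'$ to satisfy a two-sided doubling bound, so that by Beurling--Ahlfors \cite{Beurling-Ahlfors} the welding problem for $\weld$ has a unique solution $(\g;f,g)$. It remains to deduce $\log f'|_{S^1},\log g'|_{S^1}\in H^{1/2}$ from the single real datum $\log\weld'\in H^{1/2}$. This is the crux: the welding relation is one real equation constraining two unknown boundary functions that are furthermore forced to be real parts of Hardy-class functions in $\m D$ and $\m D^*$, and the composition operator $u\mapsto u\circ\weld$ is only bounded — not bounded below — on $H^{1/2}$, so it cannot simply be inverted. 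The route I would take bypasses this inversion and argues through quasiconformal extensions: produce an extension of $\weld$ to $\m D$ whose Beltrami coefficient $\mu$ satisfies $\int_{\m D}\abs{\mu}^2(1-\abs z^2)^{-2}\,\dd^2 z<\infty$, the condition defining the Weil--Petersson universal Teichm\"uller space $T_0(1)$. The key estimate is that a natural extension of $\weld$ (for instance the Douady--Earle or the Beurling--Ahlfors extension) has Beltrami coefficient in this hyperbolic $L^2$ class precisely when $\log\weld'\in H^{1/2}$. Once $[\mu]\in T_0(1)$ is established, the identification of $T_0(1)$ with the curves satisfying $f''/f'\in L^2(\m D)$ --- due to Cui and to Takhtajan--Teo \cite{TT06} --- yields that $\g$ is Weil--Petersson, closing the equivalence.

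I expect Step 3, and specifically the quantitative comparison between the boundary norm $\norm{\log\weld'}_{H^{1/2}}$ and the interior hyperbolic $L^2$ energy of the dilatation of a quasiconformal extension, to be the principal difficulty. The delicacy is genuinely borderline-Sobolev: $H^{1/2}(S^1)$ is not an algebra and its members need not be bounded, so controlling $\weld'=\exp(\log\weld')$ and the nonlinear dependence of $f,g$ on the welding requires careful use of exponential integrability together with Hardy-space (Riemann--Hilbert) decompositions, rather than soft functional-analytic arguments. The forward direction, by contrast, is comparatively soft once the Nag--Sullivan boundedness of the composition action on $H^{1/2}$ is available.
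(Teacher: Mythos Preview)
The paper does not give its own proof of this statement; it is quoted from \cite{shen13} and used as a black box thereafter. There is therefore nothing in the paper to compare your argument against.

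That said, your outline is a faithful sketch of the route taken in the cited literature (Shen, building on Cui and Takhtajan--Teo). Step~1 is correct and standard. Step~2 is fine once the Nag--Sullivan boundedness of precomposition by quasisymmetric maps on $H^{1/2}(S^1)$ is granted, together with the fact that a Weil--Petersson quasicircle is in particular a quasicircle (so that $\weld$ is quasisymmetric). Step~3 correctly isolates the hard direction and the correct mechanism: one does not try to invert the welding relation in $H^{1/2}$ directly, but instead passes through a quasiconformal extension whose Beltrami coefficient lies in the hyperbolic $L^2$ class, and then invokes the identification of $T_0(1)$ with curves having $f''/f'\in L^2(\m D)$. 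That is precisely Shen's strategy.

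One point in Step~3 deserves tightening. The assertion that $\log\weld'\in H^{1/2}$ forces quasisymmetry is right, but the justification via ``exponential integrability $\Rightarrow$ two-sided doubling'' is not quite the mechanism. The clean chain is: $H^{1/2}(S^1)\subset \BMO(S^1)$ (indeed $\VMO$), then John--Nirenberg makes $\weld'=e^{\log\weld'}$ an $A_\infty$ weight, and an $A_\infty$ (indeed $A_2$-type) condition on $\weld'$ is what yields the quasisymmetry bound. Moser--Trudinger integrability of $\log\weld'$ by itself does not directly produce the uniform doubling of $\weld'$; you need the $\BMO$ oscillation control. This does not affect the overall correctness of your plan, only the phrasing of that step.
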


\begin{lem}\label{lem:add_energy} Let us collect a few properties which follow directly from Definition~\ref{df:Loewner_energy_jordan} of the Loewner energy:
\begin{enumerate}
    \item   $I^L$ and $I^A$ are  $\PSL(2,\m C)$-invariant.
    \item  If $\g_1$ is an arc and $\g_2$ is a chord in $\Chat \smallsetminus \g_1$ connecting the two end points of $\g_1$, then the additivity property of the Loewner driving function shows
       \begin{equation}\label{eq:add_energy}
       I^L (\g_1 \cup \g_2) = I^A (\g_1) + I_{\Chat \smallsetminus \g_1}^C (\g_2). 
       \end{equation}
       \item  We have $I^A (\g_1) = I^L (\g_1 \cup \eta)$ where $\eta$ is the hyperbolic geodesic in $\Chat \smallsetminus \g_1$ connecting the two end points of $\g_1$.
       \item \label{it:arc_min} $I^L (\g) = 0$ if and only if $\g$ is a circle 
       in $\Chat$, and this implies $I^A (\g_1) = 0$ if and only if $\g_1$ is a circular arc. 
\end{enumerate}
\end{lem}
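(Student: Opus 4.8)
The four assertions all rest on two facts that precede the statement: that the chordal energy $I^C_{D;\bpt,\ept}$ of Definition~\ref{df:chordalLE} depends only on the conformal class of the triple $(D;\bpt,\ept)$, and the additivity property of the Loewner driving function. For item~(1) the plan is to unwind the definitions. Given $A\in\PSL(2,\m C)$, the map $A$ carries $(D;\bpt,\ept)$ conformally onto $(A(D);A(\bpt),A(\ept))$, so if $\varphi$ uniformizes $(D;\bpt,\ept)$ onto $(\m H^2;0,\infty)$ then $\varphi\circ A^{-1}$ uniformizes the image triple and sends $A(\g)$ to $\varphi(\g)$; the two driving functions then coincide, whence $I^C$ is $\PSL(2,\m C)$-invariant. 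Since $\Chat\smallsetminus A(\g[0,\eps])=A(\Chat\smallsetminus\g[0,\eps])$ and $A(\g[\eps,1])$ is the corresponding chord (resp.\ simple curve), the limits in Definition~\ref{df:Loewner_energy_jordan} defining $I^L$ and $I^A$ are unchanged under $A$.

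For item~(2) I would parametrize the Jordan curve $\g:=\g_1\cup\g_2$ so that it starts at $p:=\g_1(0)$, traces $\g_1$ up to its other endpoint $q:=\g_1(1)$, and then traces $\g_2$ back to $p$. Fixing a small $\eps>0$ and uniformizing $\Chat\smallsetminus\g[0,\eps]$ onto $\m H^2$ with $\g(\eps)\mapsto 0$ and $p\mapsto\infty$, let $s_\eps$ be the capacity time at which the growing curve first reaches $q$. By the additivity of the driving function, the driving function of $\g[\eps,1]$ on $[0,s_\eps]$ is exactly that of the sub-arc $\g_1'$ of $\g_1$ from $\g(\eps)$ to $q$, viewed as a simple curve in $\Chat\smallsetminus\g[0,\eps]$ with marked points $\g(\eps),p$; and on $[s_\eps,\cdot)$, after applying $g_{s_\eps}$ and re-centering, it is the driving function of $\g_2$ in $\Chat\smallsetminus\g_1$ with marked points $q,p$ --- a quantity independent of $\eps$. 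Splitting the Dirichlet integral of the driving function at time $s_\eps$ then gives
\[ I^C_{\Chat\smallsetminus\g[0,\eps]}(\g[\eps,1]) \;=\; I^C_{\Chat\smallsetminus\g[0,\eps];\,\g(\eps),\,p}(\g_1') \;+\; I^C_{\Chat\smallsetminus\g_1}(\g_2), \]
and since $\g[0,\eps]$ is an initial sub-arc of $\g_1$, letting $\eps\to 0$ makes the first term converge to $I^A(\g_1)$ by Definition~\ref{df:Loewner_energy_jordan}; this is \eqref{eq:add_energy}.

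Item~(3) would then follow by applying \eqref{eq:add_energy} with $\g_2=\eta$ the hyperbolic geodesic of $\Chat\smallsetminus\g_1$ joining the endpoints of $\g_1$: its image under the uniformization of $\Chat\smallsetminus\g_1$ onto $\m H^2$ is a vertical half-line, so its driving function vanishes identically and $I^C_{\Chat\smallsetminus\g_1}(\eta)=0$ by Remark~\ref{rem:minimizer_geodesic}, giving $I^A(\g_1)=I^L(\g_1\cup\eta)$. For item~(4), the ``if'' part of the first equivalence I would obtain from item~(1) after reducing to $\g=\partial\m D$, for which Theorem~\ref{thm:main} with $f=g=\Id$ makes all three terms on the right-hand side vanish (alternatively, a line rooted at $\infty$ has every chord $\g[\eps,1]$ equal to the hyperbolic geodesic of the conjugation-symmetric domain $\Chat\smallsetminus\g[0,\eps]$, so all its driving functions vanish). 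The equivalence for $I^A$ is then a corollary of item~(3) together with the first equivalence: $I^A(\g_1)=0$ iff $\g_1\cup\eta$ is a circle $C$, which holds iff $\g_1\subset C$ is a circular arc --- for the converse one normalizes via item~(1) the circle $C$ to $\m R\cup\{\infty\}$ and $\g_1$ to a segment $[\bpt,\ept]$, and the hyperbolic geodesic of $\Chat\smallsetminus[\bpt,\ept]$ joining $\bpt$ and $\ept$ is, by the conjugation symmetry of that domain, precisely the complementary arc, so $\g_1\cup\eta$ is a circle. The step I expect to be the main obstacle, and the only one that is not a formal consequence of the definitions and of additivity, is the ``only if'' part of the first equivalence: one has that $\eps\mapsto I^C_{\Chat\smallsetminus\g[0,\eps]}(\g[\eps,1])$ is non-increasing (again by additivity), so $I^L(\g)$ is its supremum and $I^L(\g)=0$ forces every $\g[\eps,1]$, for every choice of root, to be a hyperbolic geodesic; that such a Jordan curve must be a circle is the rigidity statement of \cite{W1,RW}, which I would quote rather than reprove.
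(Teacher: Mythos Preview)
Your proposal is correct. The paper does not supply an explicit proof of this lemma --- it is stated as a list of properties that ``follow directly from Definition~\ref{df:Loewner_energy_jordan}'' and, for item~(2), from ``the additivity property of the Loewner driving function'' --- so there is no argument to compare against beyond that hint. Your unpacking follows exactly those indications: conformal invariance of $I^C$ for item~(1), splitting the Dirichlet integral at the capacity time of $q$ for item~(2), Remark~\ref{rem:minimizer_geodesic} for item~(3), and the combination of item~(3) with the characterization of circles for item~(4). Your identification of the ``only if'' direction in item~(4) as the one step that is not purely formal is accurate; citing \cite{W1,RW} is appropriate (and one can alternatively read it off Theorem~\ref{thm:main}, since $I^L(\g)=0$ forces $f''/f'\equiv 0$ on $\m D$, hence $\Omega$ is a disk).
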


\subsection{Optimizing the curve} \label{sec:opt_curve}
The following question was studied in \cite{MRW1,MRW2}, and we recall the results. Given $n$ distinct points $\mathbf{z} = (z_1, \ldots, z_n)$ on $\Chat$ and a Jordan curve $\tau$ passing through these $n$ points in order, we let 
$\mc L (\mathbf{z}; \tau)$ be the set of all Jordan curves passing through $z_1, \ldots, z_n$ and homotopic to $\tau$ relative to $z_1, \ldots, z_n$ (this means the points $z_1, \ldots, z_n$ remain fixed along the continuous deformation of the Jordan curve). In particular, if $\g \in \mc L (\mathbf{z}; \tau)$, then $\g$ traverses the $n$ points in the same order as $\tau$ (the converse is not true, see Figure~\ref{fig:two_curve}). 

\begin{figure}[ht]
    \centering
    \includegraphics[width=0.5\textwidth]{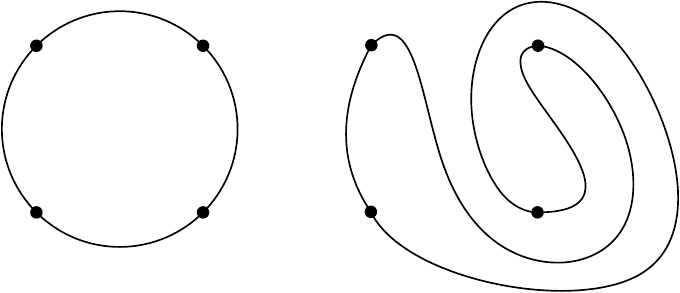}
    \caption{Example of two non-homotopic curves passing through the same four points in the same order.}
    \label{fig:two_curve}
\end{figure}

\begin{df}\label{df:geod}
    We say that a Jordan curve $\g$ in $\mc L (\mathbf{z}; \tau)$ has the \emph{geodesic property} if for all $k \in \{1, \ldots, n\}$, $\g_k$ is the hyperbolic geodesic in the simply connected domain $\O_k : = \Chat \smallsetminus (\g \smallsetminus \g_k)$, where $\g_k$ denotes the part of $\g$ between $z_{k}$ to $z_{k+1}$ (where $z_{n+1} = z_1$).
\end{df}

\begin{lem}\label{lem:minimizer_geodesic}
    If $\g$ minimizes  $I^L$ in $\mc L (\mathbf{z}; \tau)$, then $\g$ has the geodesic property.
\end{lem}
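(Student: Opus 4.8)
The plan is to argue by contradiction, exploiting the additivity of the Loewner energy (Lemma~\ref{lem:add_energy}, item (2)) together with the fact that within each complementary domain the unique minimizer of the chordal energy is the hyperbolic geodesic (Remark~\ref{rem:minimizer_geodesic}, Remark~\ref{rem:pathological} notwithstanding). Suppose $\g$ minimizes $I^L$ in $\mc L(\mathbf z;\tau)$ but fails the geodesic property: there is some index $k$ such that $\g_k$, the sub-arc of $\g$ running from $z_k$ to $z_{k+1}$, is not the hyperbolic geodesic of $\O_k = \Chat\smallsetminus(\g\smallsetminus\g_k)$ joining the prime ends $z_k$ and $z_{k+1}$ of $\O_k$. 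Write $\g^\flat := \g\smallsetminus\g_k$ for the complementary arc (a simple arc with endpoints $z_k,z_{k+1}$), and let $\eta$ be the hyperbolic geodesic of $\O_k$ connecting $z_k$ to $z_{k+1}$. Then $\g' := \g^\flat \cup \eta$ is again a Jordan curve: $\eta$ is a simple arc in the open domain $\O_k$ meeting $\partial\O_k$ exactly at the two prime ends $z_k,z_{k+1}$, so it closes up $\g^\flat$ without self-intersection.

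The first key step is that $\g'\in\mc L(\mathbf z;\tau)$. It clearly passes through $z_1,\dots,z_n$ in the same cyclic order. For the homotopy, observe that $\O_k$ is a simply connected domain (a disk once we uniformize), and both $\g_k$ and $\eta$ are simple arcs in $\ad{\O_k}$ joining the same two boundary prime ends; hence they are homotopic in $\ad{\O_k}$ rel endpoints by pulling back to $\m H^2$ via a conformal map $(\O_k;z_k,z_{k+1})\to(\m H^2;0,\infty)$ — any two chords with the same endpoints in a disk are homotopic rel endpoints, since the chord complement is connected. This homotopy keeps $\g^\flat$ (hence all of $z_1,\dots,z_n$) fixed, so $\g'\simeq\g$ rel $\mathbf z$. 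The second key step is the energy comparison. Regarding $\g^\flat$ as an arc (rooted at one of its endpoints) and $\g_k$ (resp.\ $\eta$) as a chord in $\Chat\smallsetminus\g^\flat = \O_k$ connecting its endpoints, the additivity identity \eqref{eq:add_energy} gives
\[
I^L(\g) = I^A(\g^\flat) + I^C_{\O_k}(\g_k), \qquad
I^L(\g') = I^A(\g^\flat) + I^C_{\O_k}(\eta).
\]
Since $\eta$ is the hyperbolic geodesic of $\O_k$, Remark~\ref{rem:minimizer_geodesic} gives $I^C_{\O_k}(\eta)=0$, while $I^C_{\O_k}(\g_k)>0$ because $\g_k\neq\eta$ and the geodesic is the unique minimizer. (If $I^C_{\O_k}(\g_k)=\infty$ the inequality is a fortiori; one should note $I^A(\g^\flat)<\infty$, else $\g$ is not even a minimizer — compare with a curve built from geodesic arcs, which has finite energy.) Therefore $I^L(\g') < I^L(\g)$, contradicting minimality of $\g$. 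Hence $\g$ has the geodesic property.

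The step I expect to require the most care is verifying $\g'\in\mc L(\mathbf z;\tau)$ — specifically that replacing one arc by the geodesic really produces an embedded curve homotopic to $\tau$ rel $\mathbf z$, rather than merely a curve through the same points in the same cyclic order (the excerpt explicitly warns via Figure~\ref{fig:two_curve} that cyclic order does not determine the homotopy class). The resolution is that the surgery is \emph{local}: we only modify $\g$ inside the open disk $\O_k$, leaving $\g^\flat$ pointwise fixed, and inside a disk the chord-homotopy-rel-endpoints argument is unconditional. A secondary technical point is that the prime ends $z_k,z_{k+1}$ of $\O_k$ are genuinely distinct and accessible, so that "the hyperbolic geodesic in $\O_k$ connecting $z_k$ to $z_{k+1}$" is well-defined; this is automatic since $\g$ is a Jordan curve and the $z_j$ are distinct. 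Everything else is a direct application of the additivity lemma and the characterization of the chordal-energy minimizer.
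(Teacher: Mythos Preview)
Your proof is correct and follows essentially the same route as the paper: use the additivity \eqref{eq:add_energy} to split $I^L(\g)=I^A(\g\smallsetminus\g_k)+I^C_{\O_k}(\g_k)$, then replace $\g_k$ by the hyperbolic geodesic in $\O_k$ and invoke Remark~\ref{rem:minimizer_geodesic} to obtain a strictly smaller competitor. The paper's proof is terser and simply asserts that the modified curve lies in $\mc L(\mathbf z;\tau)$; your extra paragraph justifying the homotopy (local surgery inside the simply connected $\O_k$) and the finiteness of $I^A(\g^\flat)$ fills in details the paper leaves implicit, but the argument is the same.
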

\begin{proof}
    From Equation~\eqref{eq:add_energy}, we know that for all $k \in \{1, \ldots, n\}$, 
    \begin{equation}\label{eq:add_opt_1}
    I^L(\g) = I^A(\g \smallsetminus \g_k) + I^C_{\O_k} (\g_k).
    \end{equation}
    If $\g_k$ is not the hyperbolic geodesic $\eta_k$ in $\O_k$, then replacing $\g_k$ by $\eta_k$ we obtain another Jordan curve in $\mc L (\mathbf{z}; \tau)$ but with smaller Loewner energy by Remark~\ref{rem:minimizer_geodesic}. This contradicts the assumption that $\g$ minimizes the Loewner energy in $\mc L (\mathbf{z}; \tau)$.
\end{proof}
\begin{lem}[See also \cite{MRW1,SWW}]\label{lem:pw_mob}
    If $\g \in \mc L (\mathbf{z}; \tau)$ has the geodesic property, then any welding homeomorphism $\weld$ of $\g$ is piecewise M\"obius \textnormal(in $\PSL(2,\m R)$\textnormal). If, in addition, $\g$ is Weil--Petersson, then $\weld$ is $C^{1,1}$.
\end{lem}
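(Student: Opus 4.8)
The plan is to work piece by piece. Fix $\g \in \mc L(\mathbf z;\tau)$ with the geodesic property, and fix conformal maps $f\colon \m H^2 \to \O$ and $g\colon \m H^{2,*} \to \O^*$, so that $\weld = g^{-1}\circ f|_{\RP^1}$. The points $z_1,\dots,z_n$ divide $\RP^1$ (via $f^{-1}$) into $n$ boundary arcs $J_1,\dots,J_n$, and it suffices to show that $\weld$ restricted to each $J_k$ is the restriction of a single element of $\PSL(2,\m R)$. Consider the $k$-th piece $\g_k$, the part of $\g$ from $z_k$ to $z_{k+1}$, and let $\O_k = \Chat\smallsetminus(\g\smallsetminus\g_k)$, a simply connected domain whose boundary is the arc $\g_k$ together with the arc $\g\smallsetminus\g_k$. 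By hypothesis $\g_k$ is the hyperbolic geodesic of $\O_k$ joining the prime ends $z_k,z_{k+1}$.

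The key step is the following reflection argument. Because $\g_k$ is a hyperbolic geodesic in $\O_k$, uniformize $\O_k$ by a conformal map $F\colon \m H^2 \to \O_k$ sending $\m R_{>0}$ and $\m R_{<0}$ (say, after a M\"obius normalization) to $\g_k$; then $\g_k$ corresponds to a segment of the real line, hence $F$ (and therefore the whole curve $\g$ near $\g_k$) extends analytically across $\g_k$ by Schwarz reflection. Concretely: the component $\O$ lies on one side of $\g_k$ and $\O^*$ on the other, and near $\g_k$ both $f$ and $g$ are, after composing with $F^{-1}$, conformal maps defined on one side of a line segment with the segment in their boundary. Schwarz reflection across that segment glues $f$ on $\O$-side to $g$ on $\O^*$-side into a single conformal map on a neighborhood of the segment in $\m H^2\cup(\text{segment})\cup\m H^{2,*}$; tracing through, $g^{-1}\circ f$ on the corresponding sub-arc of $\RP^1$ is the boundary value of a conformal map of a neighborhood in $\Chat$ fixing a circular arc, which forces it to be a M\"obius transformation. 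Since $f^{-1}$ of this sub-arc is a neighborhood of an interior point of $J_k$, we conclude $\weld$ agrees locally with a single M\"obius map near every interior point of $J_k$; by connectedness of $J_k$ and rigidity of M\"obius maps (two M\"obius maps agreeing on an open set are equal), $\weld|_{J_k}$ is a single element of $\PSL(2,\m R)$. This gives the piecewise M\"obius property.

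For the $C^{1,1}$ improvement, assume in addition $\g$ is Weil--Petersson. By Theorem~\ref{thm:shen}, $\log\weld'\in H^{1/2}(S^1)$; in particular $\weld' $ exists and is continuous away from a null set, but more usefully $\weld$ is known to be $C^1$ with $\weld'$ H\"older of every exponent $<1/2$ is not enough — instead one argues: $\weld'$ is piecewise the derivative of a M\"obius map, hence on each open arc $J_k$ it is a smooth positive function, and one only needs to check the matching at the $n$ junction points $f^{-1}(z_k)$. The Weil--Petersson hypothesis gives $\log \weld' \in H^{1/2}(S^1) \subset \VMO(S^1)$, so $\log\weld'$ has a genuine (continuous) value at each junction point and no jump; hence $\weld'$ is continuous at the junctions, i.e.\ $\weld\in C^1(\RP^1)$. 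Finally, on each $J_k$, $(\log\weld')' = (\log(\a_k'))'\circ\cdots$ is a bounded rational function, so $\log\weld'$ is Lipschitz on each closed sub-arc; combined with continuity at the finitely many junctions, $\log\weld'$ is globally Lipschitz, hence $\weld'$ is Lipschitz, i.e.\ $\weld\in C^{1,1}(\RP^1)$.

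The main obstacle I anticipate is making the Schwarz-reflection step fully rigorous at the level of prime ends and boundary regularity — one must know a priori that $f$ and $g$ extend continuously and conformally past the open geodesic arc $\g_k$ (away from the endpoints $z_k,z_{k+1}$), which uses that a hyperbolic geodesic in a simply connected domain is a real-analytic arc to which the Riemann map extends analytically; near the endpoints $z_k$ the curve $\g$ generally has a corner and no such extension exists, so the argument only yields M\"obius behavior on the \emph{open} arcs $J_k$ and the endpoint analysis must be done separately via the $\VMO$/$H^{1/2}$ input. The interplay between the purely conformal reflection argument (giving piecewise M\"obius) and the function-space input of Theorem~\ref{thm:shen} (upgrading the junction regularity) is the crux.
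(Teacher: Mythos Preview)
Your underlying idea is the paper's, but the setup of your uniformizing map $F$ is garbled and this makes the reflection step hand-wavy. If $F\colon\m H^2\to\O_k$ is a Riemann map then $\m R=\partial\m H^2$ goes to $\partial\O_k=\g\smallsetminus\g_k$, not to $\g_k$; the geodesic $\g_k$ is an \emph{interior} arc of $\O_k$ and corresponds under $F$ to the vertical ray $\ii\m R_+$, not to $\m R_{>0}\cup\m R_{<0}$. The paper avoids Schwarz reflection altogether by uniformizing $\O_k$ from the \emph{slit plane}: take $\varphi_k\colon\m C\smallsetminus\m R_-\to\O_k$ sending $0,\infty$ to $z_k,z_{k+1}$. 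The geodesic property gives $\varphi_k(\m R_+)=\g_k$, so $\varphi_k|_{\m H^2}\colon\m H^2\to\O$ and $\varphi_k|_{\m H^{2,*}}\colon\m H^{2,*}\to\O^*$ are themselves conformal. Since any two Riemann maps onto the same domain differ by a M\"obius automorphism, $f=\varphi_k|_{\m H^2}\circ\a_k^{-1}$ and $g=\varphi_k|_{\m H^{2,*}}\circ\b_k^{-1}$ for some $\a_k,\b_k\in\PSL(2,\m R)$, and continuity of the single map $\varphi_k$ across $\m R_+$ yields $\weld=\b_k\circ\a_k^{-1}$ on $f^{-1}(\g_k)=\a_k(\m R_+)$ directly. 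Your reflection sketch can be repaired to exactly this (the global anti-conformal involution of $\O_k$ fixing its geodesic is what makes the extension of $f$ a genuine conformal map $\m H^{2,*}\to\O^*$, not merely a local analytic continuation), but as written it confuses the boundary arc with the geodesic and the phrase ``a conformal map of a neighborhood in $\Chat$ fixing a circular arc, which forces it to be M\"obius'' is not a valid deduction.

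Your $C^{1,1}$ argument is correct and is the paper's dichotomy: piecewise M\"obius makes $\log\weld'$ piecewise smooth; a jump at a junction would put $\log\weld'\notin H^{1/2}(S^1)$, contradicting Theorem~\ref{thm:shen}; hence $\weld'$ is continuous, and then Lipschitz since each M\"obius piece has bounded second derivative on its closed arc.
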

\begin{proof}
    We use the notation in Definition~\ref{df:geod}.  Let $\O$ and $\O^*$ be the two connected components of $\Chat \smallsetminus \g$ on the left and right of $\g$, respectively. Let $f : \m H^2 \to \O$ and $g : \m H^{2,*} \to \O^*$ be any conformal maps. Then by definition $\weld : = g^{-1} \circ f$ is a welding homeomorphism of $\g$. 

    Let $k \in \{0,\ldots, n-1\}$ and $\varphi_k$ be a conformal map from $\m C \smallsetminus \m R_-$ onto $\O_k$ sending respectively $0$ and $\infty$ to  $z_k$ and $z_{k+1}$.  Since $\g_k$ is the hyperbolic geodesic in $\O_k$ connecting $z_k$ and $z_{k+1}$,  $\varphi_k ( \m R_+) =\g_k$. In other words, $\varphi_k |_{\m H^2}$ is a conformal map from $\m H^2$ onto $\O$.  Therefore, there exists $\a_k \in \PSL(2,\m R)$ such that $f = \varphi_k |_{\m H^2} \circ \a_k^{-1}$. Similarly, there exists $\b_k \in \PSL(2,\m R)$ such that $g =  \varphi_k |_{\m H^{2,*}}\circ \b_k^{-1}$. 
    Since $\varphi_k$ is continuous across $\m R_+$,
    $$\weld (x) = g^{-1} \circ f (x) = \b_k \circ \a_k^{-1} (x) \qquad \forall x \in \a_k (\m R_+) = f^{-1} (\g_k).$$
    We obtain that $\weld$ is piecewise $\PSL(2,\m R)$.
In this case, there are two possibilities: 
\begin{itemize}
    \item If $\weld$ is continuously differentiable, then its derivative is also Lipschitz, namely, $\weld$ is $C^{1,1}$. Theorem~\ref{thm:shen} implies that $\weld$ is Weil--Petersson. 
    \item If $\weld$ is not continuously differentiable, then its derivative has a discontinuity, and $\weld$ is not Weil--Petersson by Theorem~\ref{thm:shen}. 
\end{itemize}
This dichotomy proves the second claim.
\end{proof}

\begin{thm}[See \cite{MRW1,MRW2}]\label{thm:opt_1_unique}
    There is a unique minimizer of $I^L$ in $\mc L (\mathbf{z}; \tau)$, which is also the unique Jordan curve in $\mc L (\mathbf{z}; \tau)$ that has the geodesic property and finite Loewner energy.
\end{thm}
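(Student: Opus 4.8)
The plan is to prove the statement in two halves: existence and uniqueness of a curve with the piecewise geodesic property (and finite Loewner energy), and then the identification of this curve with the energy minimizer. For the first half, I would work in the "welding picture" via the correspondence of Lemma~\ref{lem:pw_mob}. A Jordan curve $\g \in \mc L(\mathbf z;\tau)$ with the geodesic property is, up to $\PSL(2,\m C)$, determined by its welding $\weld$, which by Lemma~\ref{lem:pw_mob} is piecewise $\PSL(2,\m R)$ with breakpoints mapping to the $n$ marked points; conversely, any such piecewise-M\"obius welding solves a conformal welding problem producing a piecewise geodesic curve (one has to check it is genuinely a homeomorphism of $\RP^1$, which amounts to a combinatorial positivity/monotonicity condition on the pieces). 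So the task becomes: among piecewise-M\"obius circle homeomorphisms with $n$ prescribed preimages of the $n$ marked points and the right homotopy/cyclic data, show there is exactly one that is a valid welding of a curve in $\mc L(\mathbf z;\tau)$. I expect this to follow from a fixed-point or continuity-method argument: parametrize the candidate configurations by the positions of the $2n$ breakpoints on the two copies of $\RP^1$ (modulo the $\PSL(2,\m R)\times\PSL(2,\m R)$ ambiguity), impose that consecutive M\"obius pieces agree at breakpoints, and count dimensions; the constraints should cut out a single point, with nondegeneracy coming from the strict convexity behavior of hyperbolic geodesics. This is essentially the content of \cite{MRW2}, which I would cite and sketch rather than reprove.

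For the second half — that a curve with the geodesic property and finite Loewner energy actually minimizes $I^L$ in $\mc L(\mathbf z;\tau)$ — I would argue by a strict-convexity / variational comparison. Lemma~\ref{lem:minimizer_geodesic} already shows any minimizer has the geodesic property, so once the first half gives a \emph{unique} finite-energy curve with the geodesic property, and once existence of \emph{some} minimizer is established, the two must coincide. Existence of a minimizer can be obtained from a compactness argument: the sublevel sets $\{I^L \le C\} \cap \mc L(\mathbf z;\tau)$ are nonempty (take $\g_k$ to be the geodesic in $\O_k$ for the given $\tau$ and use \eqref{eq:add_energy} piece by piece, though one must ensure the resulting curve stays in the homotopy class — this needs an approximation argument) and precompact in a suitable topology (Carath\'eodory / Hausdorff convergence of curves through fixed points), with $I^L$ lower semicontinuous by the Loewner-chain definition and Fatou. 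Then a minimizing sequence subconverges to a minimizer.

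The main obstacle, I expect, is the interplay between the homotopy constraint and the piecewise-geodesic replacement: replacing an arc $\g_k$ by the geodesic $\eta_k$ in $\O_k$ (as in Lemma~\ref{lem:minimizer_geodesic}) could in principle change the homotopy class relative to $z_1,\dots,z_n$, or the geodesic could fail to be disjoint from the rest of the curve, so one cannot naively run the replacement. The fix is to note that $\eta_k$ and $\g_k$ are homotopic rel endpoints \emph{within} $\overline{\O_k}$ (since $\O_k$ is simply connected and contains no marked points in its interior), hence the replacement preserves membership in $\mc L(\mathbf z;\tau)$; making this precise — in particular that the geodesic stays in the closure and meets the rest of $\g$ only at $z_k,z_{k+1}$ — requires a careful use of the fact that $\g \smallsetminus \g_k$ is part of the boundary of $\O_k$. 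A second, more technical obstacle is upgrading "piecewise M\"obius" to "$C^{1,1}$": this is exactly the dichotomy in Lemma~\ref{lem:pw_mob}, so the claim reduces to showing the \emph{unique} geodesic-property curve has finite Loewner energy, i.e.\ that the minimum value is finite — which again comes from exhibiting one finite-energy competitor. Once finiteness is in hand, Theorem~\ref{thm:shen} forces the welding to be Weil--Petersson and hence $C^{1,1}$ by Lemma~\ref{lem:pw_mob}. I would organize the write-up as: (1) reduce to the welding picture; (2) invoke \cite{MRW2} for existence/uniqueness of the piecewise-geodesic curve; (3) prove existence of a minimizer by compactness, handling the homotopy constraint carefully; (4) combine with Lemma~\ref{lem:minimizer_geodesic} to conclude they coincide; (5) deduce the regularity statement.
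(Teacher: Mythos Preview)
Your proposal is correct and matches the paper's own treatment: the paper does not give a self-contained proof of this theorem either, but records exactly the two ingredients you identify --- existence of a minimizer via a compactness argument on an energy-minimizing sequence, and the (more delicate) uniqueness of the finite-energy curve with the geodesic property, which is deferred to \cite{MRW2} --- and then combines them via Lemma~\ref{lem:minimizer_geodesic}. Your additional worries about the homotopy constraint under geodesic replacement are not really obstacles: since $\O_k = \Chat \smallsetminus (\g \smallsetminus \g_k)$ is simply connected and contains none of the marked points in its interior, the geodesic $\eta_k$ automatically stays disjoint from $\g \smallsetminus \g_k$ except at the endpoints and is homotopic to $\g_k$ rel $\{z_1,\dots,z_n\}$, so the replacement step in Lemma~\ref{lem:minimizer_geodesic} is unproblematic.
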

The existence of a minimizer is shown by taking an energy minimizing sequence in $\mc L (\mathbf{z}; \tau)$ and a compactness argument allows us to extract a converging subsequence. 
The uniqueness of the Jordan curve in $\mc L (\mathbf{z}; \tau)$ that has the geodesic property and finite Loewner energy is more subtle and proved in \cite{MRW2}.

Putting these results together, we obtain Theorem~\ref{thm:opt_1}.
\begin{proof}[Proof of Theorem~\ref{thm:opt_1}]
    From Theorem~\ref{thm:opt_1_unique} we know there is a unique energy minimizer $\g$ in $\mc L (\mathbf{z}; \tau)$. Lemma~\ref{lem:minimizer_geodesic} shows that $\g$ has the geodesic property. 
 Lemma~\ref{lem:pw_mob} shows that any welding homeomorphism $\weld$ of $\g$ is $C^{1,1}$ and piecewise $\PSL(2,\m R)$. 
    
    The graph $\gr (\weld)$ is a closed positive curve in $\partial_\infty\!\AdS^3$ as we explained in Lemma~\ref{lem:covariant_varphi}. Each piece of $\weld$ that is M\"obius is a sub-interval of the boundary of a space-like geodesic plane in $\AdS^3$ by Lemma~\ref{lem:plane_mob}. Therefore, $\gr (\weld)$ is the boundary of a pleated space-like plane that is also $C^{1,1}$.
    
    To give an explicit construction of the pleated plane, we consider an arbitrary topological triangulation with vertices $(x_1, y_1), \ldots, (x_n, y_n)$ that has the topology of a disk. We replace each topological triangle with the ideal triangle in the space-like geodesic plane passing through the three vertices. These ideal triangles and the half-planes bounded by each piece of $h$ glue together to a space-like pleated plane in $\AdS^3$.  
\end{proof}

\subsection{Optimizing the welding}  \label{sec:opt_welding}

Now, we consider the second optimization problem. Let $\mathbf{x} = (x_1, \cdots, x_n)$ and $\mathbf{y} = (y_1, \cdots, y_n)$  be two sets of $n$ distinct points of $\RP^1 \simeq S^1$ in counterclockwise order.
We write
$$\Phi_{\mathbf{x,y}} : = \{\weld  \in \Homeo_+ (\RP^1) \,|\, \weld (x_k) = y_k,\, \forall k = 1, \ldots, n\}.$$
In terms of the $\AdS^3$ space, the graphs of elements in $\Phi_{\mathbf{x,y}}$ is the set of all positive curves in $\partial_\infty\!\AdS^3$ that pass through the $n$ points $(x_k,y_k)$, $k = 1, \ldots, n$.

We consider Loewner energy $I^L$ as a functional on $\Homeo_+(\RP^1)$ by assigning 
$I^L(\weld)$ to be the Loewner energy of any Weil--Petersson quasicircle that is the solution to the welding problem of $\weld$, when $\weld$ is Weil--Petersson (there is no ambiguity since $I^L$ is $\PSL(2,\m C)$-invariant), and $I^L(\weld) = \infty$ if $\weld$ is not Weil--Petersson.

\begin{df}
    We say that a Jordan curve in $\Chat$ is \emph{piecewise circular} if it is a concatenation of circular arcs.
\end{df}

The following result is the analog of Lemma~\ref{lem:minimizer_geodesic}. 

\begin{lem}\label{lem:pw_circular}
    If a circle homeomorphism minimizes $I^L$ in $\Phi_{\mathbf{x,y}}$, then it is the welding homeomorphism of a $C^{1,1}$ piecewise circular Jordan curve.
\end{lem}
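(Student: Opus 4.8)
The plan is to mirror the argument of Lemma~\ref{lem:minimizer_geodesic}, but now decompose the welding $\weld$ rather than the curve $\g$. Suppose $\weld \in \Phi_{\mathbf{x,y}}$ minimizes $I^L$; in particular $\weld$ is Weil--Petersson, so by Theorem~\ref{thm:shen} the solution $\g$ to its welding problem (with conformal maps $f\colon \m H^2 \to \O$, $g\colon \m H^{2,*}\to \O^*$, $\weld = g^{-1}\circ f$) is a Weil--Petersson quasicircle, unique up to $\PSL(2,\m C)$. The $n$ prescribed matchings $(x_k,y_k)$ correspond to $n$ marked points $w_k := f(x_k) = g(y_k)$ on $\g$, dividing $\g$ into arcs $\g_k$ joining $w_k$ to $w_{k+1}$ (indices mod $n$). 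The key point is that $\weld$ is determined, up to the $\PSL(2,\m R)\times\PSL(2,\m R)$ ambiguity of weldings, by the Jordan curve $\g$ together with the marked points $w_1,\dots,w_n$ on it — equivalently by the conformal data of $\O$ and $\O^*$ with these $n$ boundary points — and that this data is unchanged if we replace one arc $\g_k$ by any other simple arc joining $w_k$ to $w_{k+1}$ in $\Chat \smallsetminus (\g\smallsetminus\g_k)$ and homotopic to it relative to the endpoints. So: first I would set up this correspondence carefully, noting that $\weld(x_k)=y_k$ is exactly the statement that $f$ and $g$ send $x_k, y_k$ to the same point $w_k$.

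Second, I would run a one-arc replacement. Fix $k$ and let $\O_k := \Chat \smallsetminus (\g \smallsetminus \g_k)$, a simply connected domain with two marked prime ends $w_k, w_{k+1}$; write $\eta_k$ for the hyperbolic geodesic of $\O_k$ between them. Replacing $\g_k$ by $\eta_k$ produces a new Jordan curve $\tilde\g$ passing through $w_1,\dots,w_n$ in the same order. Crucially, the welding of $\tilde\g$ (with respect to the induced marked points) lies again in $\Phi_{\mathbf{x,y}}$: the complementary components of $\tilde\g$ are conformally the same pair of domains-with-$n$-marked-points as for $\g$ on the two sides away from $\g_k$, glued the same way — one has to check that swapping $\g_k$ for $\eta_k$ does not change the conformal type of $(\O; w_1,\dots,w_n)$ and $(\O^*; w_1,\dots,w_n)$ as bordered surfaces, which follows because $\eta_k$ and $\g_k$ both separate $\O_k$ into the relevant left/right pieces in the same homotopy class. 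Then by the additivity identity \eqref{eq:add_energy} applied to the arc $\g\smallsetminus\g_k$ and the chord $\g_k$ in $\O_k$,
\begin{equation*}
 I^L(\g) = I^A(\g\smallsetminus\g_k) + I^C_{\O_k}(\g_k) \ge I^A(\g\smallsetminus\g_k) + I^C_{\O_k}(\eta_k) = I^L(\tilde\g),
\end{equation*}
with equality iff $\g_k = \eta_k$, by Remark~\ref{rem:minimizer_geodesic}. Since $\weld$ minimizes, equality must hold for every $k$, so each $\g_k$ is the hyperbolic geodesic of $\O_k$; i.e. $\g$ has the geodesic property of Definition~\ref{df:geod} with respect to $w_1,\dots,w_n$.

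Third, I would translate the geodesic property into the statement of the lemma. By Lemma~\ref{lem:pw_mob}, a Jordan curve with the geodesic property has welding that is piecewise $\PSL(2,\m R)$, and since $\g$ is Weil--Petersson the welding is in fact $C^{1,1}$. But here we want the conclusion on the curve side, not the welding side: the \emph{dual} statement. So I would instead argue directly that a hyperbolic geodesic arc $\g_k$ in $\O_k$, seen through a conformal uniformization of $\O_k$ onto a half-plane sending $w_k,w_{k+1}$ to $0,\infty$, is the image of the imaginary axis, hence in $\Chat$ it is an arc of a circle — exactly as in Remark~\ref{rem:plane_H}, geodesics of $\m H^2$ correspond to circular arcs under conformal maps of the disk only up to Möbius, but geodesics in a simply connected domain are, by definition, preimages of straight vertical lines, which are Möbius-images of circular arcs. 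Wait — more carefully: the hyperbolic geodesic of $\O_k$ between $w_k$ and $w_{k+1}$ is $\psi(\ii\m R_+)$ where $\psi\colon(\m H^2;0,\infty)\to(\O_k;w_k,w_{k+1})$ is conformal, and $\psi$ extends to a Möbius map of $\Chat$ only if $\O_k$ is a disk; in general $\psi(\ii\m R_+)$ need not be circular. The correct statement is that it is circular precisely when the curve is Weil--Petersson and the pieces fit together smoothly, which is what Lemma~\ref{lem:pw_mob} gives on the welding side, and the welding of a piecewise-circular curve is what we must recognize. I would therefore conclude by invoking the $\m H^3$/$\AdS^3$ duality: the geodesic property for $\g$ relative to $w_1,\dots,w_n$ says each arc $\g_k$ lies on the boundary circle of a round disk in $\Chat$, i.e. on the ideal boundary of a totally geodesic plane in $\m H^3$, so $\g$ bounds a pleated geodesic plane in $\m H^3$; and $\g$ being Weil--Petersson forces the pleated plane, hence $\g$, to be $C^{1,1}$. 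This is precisely a piecewise circular $C^{1,1}$ Jordan curve, and by construction its welding is $\weld$.

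\textbf{Main obstacle.} The delicate step is the second one: verifying that replacing $\g_k$ by the geodesic $\eta_k$ genuinely keeps the welding inside $\Phi_{\mathbf{x,y}}$ — i.e. that the marked conformal structures of the two complementary domains are unchanged and glue to give a homeomorphism still sending each $x_k$ to $y_k$. One must be careful that the surgery is purely local to $\O_k$ and that the prime-end/boundary identifications on $\partial\O$ and $\partial\O^*$ coming from $f$ and $g$ are untouched away from the $k$-th arc, while on the $k$-th arc the new welding is Möbius (hence still respects the endpoints $x_k\mapsto y_k$, $x_{k+1}\mapsto y_{k+1}$). I also want to make sure that the new curve $\tilde\g$ is still a Jordan curve (the geodesic $\eta_k$ cannot re-intersect $\g\smallsetminus\g_k$ since it lies in the domain $\O_k$ whose boundary is $\g\smallsetminus\g_k$) and that it lies in the same homotopy class so that the iteration is well-posed.
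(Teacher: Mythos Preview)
Your proposal has a genuine gap at precisely the step you flagged as the ``main obstacle'', and it does not go through.

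\textbf{The replacement does not preserve the constraint.} You replace $\g_k$ by the hyperbolic geodesic $\eta_k$ in $\O_k=\Chat\smallsetminus(\g\smallsetminus\g_k)$ and assert that the welding of $\tilde\g$ still lies in $\Phi_{\mathbf{x,y}}$ because ``the complementary components of $\tilde\g$ are conformally the same pair of domains-with-$n$-marked-points''. This is false: changing a boundary arc changes the conformal moduli of both complementary domains with their marked points. Concretely, for $n\ge 4$ the cross-ratios of $f^{-1}(w_1),\dots,f^{-1}(w_n)$ on $\RP^1$ change when $\O$ is replaced by $\tilde\O$, so the new welding does not send $x_k\mapsto y_k$. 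Your surgery is the one appropriate to the \emph{first} optimization problem (fix points on $\Chat$, vary the curve through them), not to the second (fix the welding values on the $x_k$). Running it here you would simply rediscover Lemma~\ref{lem:minimizer_geodesic} for the wrong constraint.

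\textbf{Geodesic property does not give circular arcs.} Even granting the geodesic property, your attempt to conclude ``piecewise circular'' fails, and you noticed this yourself: the hyperbolic geodesic of a general simply connected $\O_k$ is $\psi(\ii\m R_+)$ for a conformal $\psi$, which is a circular arc only when $\psi$ extends to a M\"obius map, i.e.\ when $\O_k$ is a round disk. The sentences that follow (``it is circular precisely when the curve is Weil--Petersson and the pieces fit together smoothly'', ``the geodesic property \dots\ says each arc $\g_k$ lies on the boundary circle of a round disk'') are not correct and cannot be salvaged.

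\textbf{What the paper does instead.} The paper uses the \emph{dual} decomposition \eqref{eq:add_energy}, exchanging the roles of arc and chord:
\[
I^L(\g)=I^A(\g_k)+I^C_{\Chat\smallsetminus\g_k}(\g\smallsetminus\g_k).
\]
The key observation is that the chordal term on the right depends only on the partial welding $\{\weld_i:i\neq k\}$: after uniformizing $\Chat\smallsetminus\g_k$ to $\m H^2$ by $\varphi_k$, the chord $\varphi_k(\g\smallsetminus\g_k)$ is obtained by conformally welding two half-planes along the homeomorphism $\weld|_{\RP^1\smallsetminus(x_k,x_{k+1})}$, and its energy is invariant under the choice of $\g_k$. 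One can then replace $\g_k$ so as to minimize $I^A(\g_k)$ alone while keeping $\weld_i$ for $i\neq k$ unchanged; by Lemma~\ref{lem:add_energy}\,\ref{it:arc_min}, $I^A(\g_k)=0$ iff $\g_k$ is a \emph{circular} arc in $\Chat$. The explicit competitor (built via $z\mapsto z^2$ on $\varphi_k(\g\smallsetminus\g_k)$) has welding $\tilde\weld\in\Phi_{\mathbf{x,y}}$ that agrees with $\weld$ off $(x_k,x_{k+1})$. Finiteness of energy then forces the arcs to meet tangentially, giving $C^{1,1}$. The point to take away is that the two optimization problems require the two \emph{opposite} splittings in \eqref{eq:add_energy}.
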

\begin{proof}
For any $\weld \in \Phi_{\mathbf{x,y}}$, we write $\weld_k$ for the restriction of $\weld$ on $[x_k, x_{k+1}]$. 
We let $\g$ be a Jordan curve with conformal welding $\weld$, namely, $\weld = g^{-1} \circ f$ where $f,g$ are conformal maps associated with $\g$ as in the introduction. We write $\g_k = f([x_k, x_{k+1}])$.

From Equation~\ref{eq:add_energy}, we know that for all $k \in \{1,\ldots, n\}$,
\begin{equation}\label{eq:add_opt_2}
I^L(\g) = I^A (\g_k) + I^C_{\Chat 
\smallsetminus \g_k} (\g \smallsetminus \g_k). 
\end{equation}
The second term on the right-hand side equals $I^C_{\m H^2; 0,\infty} (\varphi_k (\g \smallsetminus \g_k))$ where $\varphi_k$ is a conformal map $\Chat 
\smallsetminus \g_k \to \m H^2$ sending the two end points of $\g_k$ to $0,\infty$.
This chordal energy depends only on $\{\weld_i\,|\,i \neq k\}$. Therefore, Property \ref{it:arc_min} of Lemma~\ref{lem:add_energy} implies that if we replace $\g_k$ by a circular arc, adjust $\varphi_k$ and $\g \smallsetminus \g_k$ such that $\varphi_k (\g \smallsetminus \g_k)$ remains unchanged, then the energy of $\g$ decreases unless $\g_k$ is already a circular arc. 
More explicitly, we note that $\varphi_k$ conjugates the welding homeomorphism $h|_{[x_k, x_{k+1}]}$ to a welding map $\m R_- \to \m R_+$. Replacing this welding map by $x \mapsto -x$, the map $s: z \mapsto z^2$ welds $\m R_+$ to $\m R_-$, the Jordan curve $\tilde \g : = s ( \varphi_k (\g \smallsetminus \g_k) )\cup \m R_+$ is a Jordan curve with smaller energy.  Moreover, the curve $\tilde \g$ has a welding in $\Phi_{\mathbf{x,y}}$ given by 
$$\tilde h (x)= \tilde g^{-1} \circ \tilde f (x)=  \begin{cases}
   g^{-1} \circ \varphi_k^{-1} \circ \varphi_k \circ f (x) =  h (x),\qquad &  x \notin (x_k, x_{k+1})\\
 g^{-1} \circ \varphi_k^{-1} (- \varphi_k \circ f (x) ), & x \in (x_k, x_{k+1}).
\end{cases}$$
See Figure~\ref{fig:opt_weld}. This shows that $\g_k$ has to be a circular arc (which is the image of $\m R_+$ under $\PSL(2,\m C)$). Hence, any minimizer of the Loewner energy in $\Phi_{\mathbf{x,y}}$ is the conformal welding of a piecewise circular Jordan curve.
\begin{figure}[ht]
    \centering
    \includegraphics[width=0.65\textwidth]{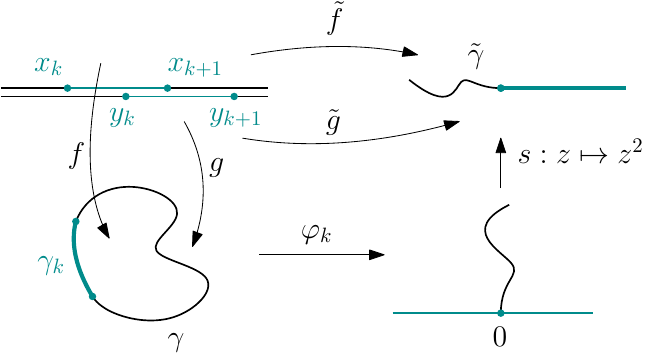}
    \caption{Conformal maps in the proof of Lemma~\ref{lem:pw_circular}.}
    \label{fig:opt_weld}
\end{figure}

To see that the corresponding Jordan curve is $C^{1,1}$, it suffices to show that the curve has a continuous derivative: circular arcs meet at $\pi$ angles. In fact, otherwise, the curve would have infinite Loewner energy; see, for example, \cite[Sec.\,2.1]{RW} or using the fact that Weil--Petersson quasicircles are asymptotically conformal (which do not allow corners).
\end{proof}

\begin{proof}[Proof of Theorem~\ref{thm:opt_2}]
    This follows immediately from Lemma~\ref{lem:pw_circular} and the fact that the totally geodesic planes in $\m H^3$ are bounded by circles in $\partial_\infty \m H^3 = \Chat$ (Remark~\ref{rem:plane_H}). The explicit construction of a pleated plane bounded by the piecewise circular curve is exactly the same as that in the proof of Theorem~\ref{thm:opt_1}.
\end{proof}

\section{Comments and open questions}

We observe the similarities between the proofs of Theorems~\ref{thm:opt_1} and \ref{thm:opt_2}. For instance, we used the same additivity property \eqref{eq:add_energy} which follows immediately from the Loewner-chain definition of Loewner energy. 
For any minimizer in Theorem~\ref{thm:opt_1}, the chordal energy in the decomposition~\eqref{eq:add_opt_1} has to vanish, whereas for a minimizer in Theorem~\ref{thm:opt_2}, the arc energy has to vanish in  \eqref{eq:add_opt_2}. However, to apply the additivity to the decomposition of the Jordan curve for all $k \in \{1,\ldots, n\}$ in Equations \eqref{eq:add_opt_1} and \eqref{eq:add_opt_2}, we used the root-independence of the Loewner energy which is more apparent from the expression of the universal Liouville action \eqref{eq_disk_energy}.

Despite the similarity of the statement and the proof between the two optimization problems, there are still some differences between the statements. For instance, we do not have an expression of the Loewner energy directly in terms of the positive curve in $\partial_\infty \!\AdS^3$ or the welding. The energy for a circle homeomorphism is currently defined by first solving the welding problem.
\begin{qn}
   A recent work \cite{epstein_yilin} expressed the Loewner energy of a Jordan curve in terms of geometric quantities in $\m H^3$. Can we express $I^L (\weld)$ solely in terms of the circle homeomorphism $\weld$ or geometric quantities in $\AdS^3$? 
\end{qn}

There are a few other similarities between the two optimization problems. 

\noindent \textbf{Loop Loewner driving function.} We can define the Loewner driving function for Jordan curves, which may be viewed as a consistent family of chordal Loewner driving functions described in Section~\ref{sec:def_Loewner}. See, e.g., \cite[Sec.\,3]{sung2023quasiconformal} for details on the loop driving function. 
In general, the Loewner driving function of a Jordan curve (including its domain of definition) depends on the choice of the orientation of the curve, the \emph{root} $\g(0)$, another marked point $\g(1/2)$ on the curve, and a conformal map $\Chat \smallsetminus \g [0,1/2] \to \m H$ sending the two marked points to $0, \infty$, where $\g [0, 1/2]$ is the sub-arc of $\g$ from $\g (0)$ to $\g(1/2)$.

For a Weil--Petersson quasicircle, the driving function is a continuous function $\m R \to \m R$, and its Dirichlet energy coincides with its Loewner energy defined in  Definition~\ref{df:Loewner_energy_jordan}. Moreover, the minimizers of our optimization problems have the following properties which follow from the definition of the loop Loewner driving function:

\begin{itemize}
    \item Let $\g$ be the unique minimizer of the Loewner energy in $\mc L (\mathbf{z}; \tau)$. Let $\l_{\cdot} : \m R \to \m R$ be any driving function of $\g$ while rooted at any point. Then there is $T \in \m R$ such that $\l_t$ is constant for all $t \ge T$.
    \item Let $\g$ be a Jordan curve whose welding is an minimizer in $\Phi_{\mathbf{x,y}}$ (if it exists). We know from Theorem~\ref{thm:opt_2} that $\g$ is piecewise circular. 
    Let $\l_{\cdot}: \m R \to \m R$ be any driving function of $\g$ while rooted at any point. Then there is $T \in \m R$ such that $\l_t$ is constant for all $t \le T$.
\end{itemize}
It seems reasonable to ask the following question, given these observations.
\begin{qn}
    Can we relate the driving function of a minimizer in Theorem~\ref{thm:opt_1} to the time reversal of that of a minimizer in Theorem~\ref{thm:opt_2}?
\end{qn}

\noindent \textbf{Schwarzian derivatives.} We recall that the \emph{Schwarzian derivative} of a conformal map $f$ is given by 
$$\mc S [f] := \frac{f'''}{f'} - \frac32 \left(\frac{f''}{f'}\right)^2.$$
Schwarzian derivatives satisfy the chain rule
$$\mc S[f\circ g] = \mc S[f] \circ g \,(g')^2 + \mc S[g] $$
and $\mc S[A] \equiv 0$ if and only if $A \in \PSL(2,\m C)$ is a M\"obius transformation. Schwarzian derivatives define quadratic differentials $\mc Q[f]: = \mc S[f](z) \,\dd z^2$ on $\Chat$ or $\m H$, so that 
\begin{equation} \label{eq:quad_dif}
\mc Q[f \circ A] = A^* \mc Q[f], \qquad \forall A \in \PSL(2,\m C).
\end{equation}
\begin{itemize}
    \item It was already noted in \cite{MRW1} that the conformal maps $f : \m H^2 \to \O$, $g:\m H^{2,*} \to \O^*$ associated with the minimizer in $\mc L (\mathbf{z}; \tau)$ satisfy
    $$\mc S[f^{-1}] (z) = \mc S[g^{-1}] (z), \qquad \forall z \in \g \smallsetminus \{z_1, \ldots, z_n\}.$$
    This follows from the chain rule of Schwarzian derivative and Lemma~\ref{lem:pw_mob}.
    Moreover, \cite{MRW1} shows that the union of $\mc S[f^{-1}]$ and $\mc S[g^{-1}]$ defines a meromorphic function on $\Chat$ with simple poles at $ \{z_1, \ldots, z_n\}$.   The work \cite{MRW2} shows that the residue at $z_k$ of this meromorphic function can be expressed in terms of the minimal energy of the curves in $\mc L (\mathbf{z}; \tau)$ and its derivatives with respect to moving the point $z_k$.
    
    If we consider the quadratic differentials $\mc Q[f^{-1}]$ and $\mc Q [g^{-1}]$ to be associated with the Jordan curve, then from \eqref{eq:quad_dif},  we see that they project to the equivalence classes of  $\PSL(2,\m C) \backslash \{\text{Jordan curves}\}$. 
  
    \item Minimizers in $\Phi_{\mathbf{x,y}}$  give rise to a similar Schwarzian derivative. More precisely, let $\g$ be a minimizer and $f,g$ be the associated conformal maps as above. Since $f$ maps the upper halfplane onto a domain bounded by circular arcs, Schwarz reflection also implies that $\mc S [f]$ extends to a meromorphic function on $\m C$ with poles at $\mathbf {x} = \{x_1, \ldots, x_n\}$. It is also not hard to see that the poles are also simple. Similarly, $\mc S[g]$ extends to another meromorphic function on $\Chat$ with simple poles at $\mathbf {y} = \{y_1, \ldots, y_n\}$. We do not know how these two meromorphic functions are related. 

    If we consider the quadratic differentials $\mc Q[f]$ and $\mc Q [g]$ to be associated with the welding $h = g^{-1} \circ f$, they are well defined on the equivalence classes of weldings with respect to \eqref{eq:equivariant_welding} which is induced by the equivalence relations $f \sim f \circ \a$ and $g \sim g \circ \b$ for $\a, \b \in \PSL(2,\m R)$.
    
\end{itemize}
    \begin{qn}
        Can we relate the residues of  $\mc S [f]$, where $f$ is a conformal map from $\m H^2$ onto a domain bounded by a piecewise circular curve, to the Loewner energy of the curve? 
    \end{qn}

\noindent \textbf{Wick rotation} From the hyperboloid models of $\m H^3$ and $\AdS^3$, one sees that these two spaces are related by simply turning one space-like direction in $\m R^{3,1}$ into a time-like direction in $\m R^{2,2}$, namely, the Wick rotation by replacing $x_3$ by $\ii x_3$ where $\ii^2 = -1$. From the discussion above, although it is far from being clear to us, we speculate that many objects or concepts come in pairs and may possibly be considered as the Wick rotation of each other. We leave it as an open question to give a precise interpretation as a Wick rotation to the pairs in rows of the following Table~\ref{table}.

\begin{longtable}{ l  c  l }
 \toprule
    \multicolumn{1}{c}{\bf  Hyperbolic space $\m H^3$}  &     \multicolumn{1}{c}{$\xleftrightarrow{\text{W.R.}}$} &  \multicolumn{1}{c}{\bf Anti-de Sitter space  $\AdS^3$} \\ \hline 
     Jordan curves in $\Chat = \partial_\infty \m H^3$ &  $\xleftrightarrow{}$ & Circle homeomorphisms and \\
   & & positive curves in  $\partial_\infty \!\AdS^3$\\
      \hline
       Loewner energy of a Jordan curve & $\xleftrightarrow{}$ &  Loewner energy of a welding \\      
      &&homeomorphism\\
       \hline 
      (1) Energy minimizer in $\mc L (\mathbf{z}; \tau)$& & (1') Energy minimizer in $\Phi_{\mathbf{x,y}}$  
       \\
       Shear along geodesics in $\m H^2 \subset \Chat$ &$\xleftrightarrow{}$&  for some $\mathbf{x}$ and $\mathbf{y}$\\
    (deforming $\partial \m H^2$ to a piecewise  &  &  (unknown characterization of the \\
    geodesic Jordan curve) &   & minimizer in terms of welding)\\ 

       \hline
      (2) (Boundary of) pleated geodesic & $\xleftrightarrow{}$ & (2') (Boundary of) pleated space-  \\
     planes   in $\m H^3$  &&  like geodesic plane in $\AdS^3$ \\\hline
     Quadratic differentials   & $\xleftrightarrow{}$ &  Quadratic differentials \\
    $\mc Q[f^{-1}]$ and $\mc Q[g^{-1}]$  && $\mc Q[f]$ and $\mc Q[g]$ \\

      \hline
      Driving function of a Jordan curve & $\xleftrightarrow{}$& Time reversal of the driving \\
      && function (+ some modifications)   \\
  \bottomrule 
  \caption{Conjectural Wick-rotation type correspondence between Jordan curves and circle homeomorphisms.} \label{table}
\end{longtable}

Theorems~\ref{thm:opt_1} and \ref{thm:opt_2} show that the welding of the item (1) in the table is (2'), and the welding of (2) is (1'). It seems to us that the following question is one natural way to give a concrete wish list about the Wick rotation.
\begin{qn}
    Is there a consistent way to make sense of Wick rotations in the rows of the table, such that the Loewner energy is invariant under the Wick rotation and the welding operation commutes with the Wick rotation? By ``commuting'' we mean that the chain of operations
    $$\text{(1)} \xrightarrow[]{\text{W.R.}} \text{(1')} \xrightarrow []{\text{solve welding problem}}\text{(2)}  \xrightarrow[]{\text{W.R.}}  \text{(2')}$$
    coincides with 
       $$\text{(1)}  \xrightarrow []{\text{welding homeomorphism}} \text{(2')}.$$
\end{qn}
\bibliographystyle{abbrv}
\bibliography{ref}
\end{document}